\journal{Discrete Optimization}
\newtheorem{lemma}{Lemma}
\newtheorem{proposition}{Proposition}
\newtheorem{fact}{Fact}
\DeclareMathOperator*{\arcsec}{arcsec}
\DeclareMathOperator*{\argmin}{argmin}
\DeclareMathOperator*{\argmax}{argmax}
\newcommand{\cS}{\mathcal{S}}
\newcommand{\cT}{\mathcal{T}}
\newcommand{\cL}{\mathcal{L}}
\newcommand{\cX}{\mathcal{X}}
\newcommand{\cQ}{\mathcal{Q}}
\def\blue{}
\begin{document}

\begin{frontmatter}
\title{Rational Polyhedral Outer-Approximations  of the Second-Order Cone
}

\author{Burak Kocuk} 
\address{Industrial Engineering Program,  Sabanc{\i} University, Istanbul, Turkey 34956\\burak.kocuk@sabanciuniv.edu}

\begin{abstract}
It is well-known that the second-order cone can be outer-approximated to an arbitrary accuracy~$\epsilon$ by a polyhedral cone of compact size defined by irrational data. In this paper, we propose two rational polyhedral outer-approximations of compact size retaining the same guaranteed accuracy~$\epsilon$. The first outer-approximation has the same size as the optimal but irrational outer-approximation from the literature.  In this case, we  provide a practical approach to obtain such an approximation defined by the smallest integer coefficients possible, which requires solving a few, small-size  integer quadratic programs. The second outer-approximation has a size larger than  the optimal irrational outer-approximation by a linear additive factor in the dimension of the second-order cone. However, in this case, the construction is explicit, and it is possible to derive an upper bound on the largest coefficient, which is sublinear in $\epsilon$ and logarithmic in the dimension. We also propose a third outer-approximation, which yields the best possible approximation accuracy given an upper bound on the size of its coefficients. Finally, we discuss two {theoretical} applications in which having a rational polyhedral outer-approximation  is crucial, {and run some  experiments which explore the benefits of the formulations proposed in this paper from a computational perspective.}
\end{abstract}


\begin{keyword}
second-order cone programming \sep polyhedral outer-approximation  \sep mixed-integer programming
\end{keyword}
\end{frontmatter}

\section{Introduction}

In their classic paper \cite{ben2001polyhedral}, Ben-Tal and Nemirovski have shown that the second-order cone  in dimension  $N$, defined as  
\[ 
L^{N} := \big \{ x \in \mathbb{R}^{N} : \  \sqrt{  x_{1}^2 + \cdots + x_{N-1}^2 } \le x_N \big\}, 
\]
can be outer-approximated to an arbitrary accuracy $\epsilon$ by a polyhedral cone in an extended space. In their construction, the required number of additional variables and linear inequalities grows polynomially  in $N$ and $\log \epsilon^{-1}$, rendering a very efficient way of approximating $L^{N}$ by a polyhedral set with a compact size representation. Moreover, they also prove that their construction is the smallest possible in size (up to a constant factor). 
For all practical purposes, this means that any second-order cone program (SOCP) can be well-approximated by a linear program (LP) of reasonable size.

Although directly solving an SOCP via a primal-dual interior point method is typically more efficient than solving an approximating LP \cite{glineur2000computational}, the situation changes considerably in the presence of integer variables. Polyhedral approximations are still very powerful for solving mixed-integer SOCPs as the benefits of warm-starting for LPs can be utilized throughout the branch-and-bound algorithm \cite{vielma2008lifted, barmann2016polyhedral, VINEL201766, lubin2018polyhedral}. 
This  is particularly important for problems that can be formulated as MISOCPs; such problems arise in different applications including location and inventory management \cite{Atamturk12}, 
power distribution systems \cite{Kocuk2015}, options pricing  \cite{Pinar13}, and 
Euclidean $k$-center problems \cite{Brandenberg}.

Despite the fact that Ben-Tal and Nemirovski's polyhedral approximation of the second-order cone is  \textit{optimal} in terms of the size of the formulation 
 given an arbitrary precision $\epsilon >0$, it is defined by a set of linear inequalities given by some irrational coefficients. This might be an issue in terms of the computational solution procedures due to the accumulation of  rounding  errors. Also, the rich theory of mixed-integer programs defined by rational polyhedra
  cannot be applied to the outer-approximating polyhedron, which, otherwise, could have been useful to understand some theoretical properties of MISOCPs.

In this paper, we provide {three} compact size, rational polyhedral outer-approximations of the second-order cone $L^N$ with slightly different properties. 
\begin{itemize}
\item
In the first case, discussed in Section \ref{sec:rationalOuter}, we construct a rational polyhedral outer-approximation of $L^N$  given an arbitrary accuracy $\epsilon > 0$ whose size in terms of the number of variables and linear inequalities is the same as the size of Ben-Tal and Nemirovski's optimal 
 construction. 
Moreover, we  provide a simple algorithm which produces the smallest integers possible in the inequality description of the outer-approximating polyhedron. These integer coefficients are obtained by solving a limited number of small-size  integer quadratic programs.

\item
In the second case, discussed in Section \ref{sec:rationalOuterHeur}, we  construct a rational polyhedral outer-approximation of $L^N$  given an arbitrary accuracy $\epsilon > 0$, whose size  in terms of the number of variables and linear inequalities is  larger by a linear additive factor of~$N$ than the size of Ben-Tal and Nemirovski's optimal construction. One advantage of the second construction is that the  coefficients of the outer-approximating polyhedron are obtained in closed form. This  allows us to derive an upper bound on the largest integer used in the formulation, which grows  {sublinearly in $\epsilon$ and logarithmically in $N$.}

\item 
{In the third case, discussed in Section \ref{sec:deltaGivenM}, we construct a rational polyhedral outer-approximation of $L^N$  given an upper bound $C$ on the coefficients of its inequality description. We show that our construction yields the best possible approximation accuracy $\epsilon$ given $C$. Moreover, its  size  in terms of the number of variables and linear inequalities is   larger by a linear additive factor of~$N$, in the worst case, than the size of Ben-Tal and Nemirovski's optimal construction with the same accuracy. 
}
\end{itemize}
Finally, we conclude our paper in Section~\ref{sec:applications} by discussing some applications of rational outer-approximations of the second-order cone. 
In particular, we focus on the analysis of a set defined as the integral vectors in the intersection of multiple balls, and explore how to estimate the integrality gap of optimizing a linear function over this set and how to compute its cardinality. In this analysis, we leverage the theory of rational polyhedra in mixed-integer linear programming and our rational polyhedral approximations of the second-order conic sets. 
{We also present some computational experiments which involve minimizing a linear function over the intersection of balls. We empirically observe that   it might be more efficient to use the outer-approximations defined with integer coefficients proposed in this paper compared to Ben-Tal and Nemirovski's   outer-approximation defined with irrational coefficients.
}

\section{First Outer-Approximation: An Optimized Construction {Given the Desired Accuracy}}
\label{sec:rationalOuter}

In this section, our aim is to outer-approximate $L^N$ to arbitrary accuracy with a compact size polyhedral cone defined by rational data. 
We will first present our main result for $L^3$ in Section~\ref{sec:extendL3} and obtain an \textit{optimized} construction in 
Section~\ref{sec:optimizedConstruction}. Using the outer-approximation of $L^3$ as a building block, we utilize the \textit{tower-of-variables} construction to obtain a rational polyhedral outer-approximation for $L^N$ in Section~\ref{sec:extendLn}.

\subsection{Extending Ben-Tal and Nemirovski's Result for  $L^3$}
\label{sec:extendL3}

Let $\nu \in \mathbb{Z}_{++}$ and $\theta_j \in (0,\pi/2)$, $j=0,\dots,\nu$.
Let us define the following system in variables $(x_1,x_2,x_3,\xi^j,\eta^j)$, $j=0,\dots,\nu$:
\begin{subequations} \label{eq:outer}
\begin{align}
 \xi^0  &\ge |x_1|\label{eq:step0x1} \\
\eta^0  &\ge |x_2| \label{eq:step0x2} \\
\xi^j  &= \ \ \ \ \cos (\theta_j) \xi^{j-1} + \sin (\theta_j) \eta^{j-1} \ \quad j=1,\dots,\nu \label{eq:stepjx1} \\
\eta^j  &\ge |-\sin (\theta_j) \xi^{j-1} + \cos (\theta_j) \eta^{j-1}| \quad j=1,\dots,\nu \label{eq:stepjx2} \\
\xi^\nu & \le x_3 \label{eq:stepx3} \\ 
\eta^\nu & \le \tan (\theta_\nu) \xi^\nu \label{eq:stepx3_}
\end{align}
\end{subequations}

The following proposition and its proof is a  modification of Proposition 2.1 in \cite{ben2001polyhedral}.
\begin{proposition}\label{thm:outerL3general}
Let $\nu \in \mathbb{Z}_{++}$, $\kappa \in[1,2)$ and $\theta_j \in (0,\pi/2)$ for $j=1,\dots,\nu$. Assume that $\theta_j \in [\theta_{j-1}/2, \kappa\theta_{j-1}/2]$, $j=1,\dots,\nu$ with $\theta_0:= \pi/2$. 
Let us define 
\[
P^\nu (\theta):=\{ x\in\mathbb{R}^3: \ \exists (\xi^j,\eta^j)_{j=0}^\nu: \  \eqref{eq:outer} \}.
\]
Then, we have
\[
L^3 \subseteq P^\nu (\theta) \subseteq   {\sec(\theta_\nu)} L^3.
\]
Moreover, in order to obtain a $(1+\delta)$ outer-approximation of $L^3$ for $\delta>0$, one can choose
\begin{equation}\label{eq:approxGuarantee}
\nu ( \delta, \kappa) = \bigg\lceil  \frac{\log \frac{2\arcsec(1+\delta)}{\pi}}{\log \frac{\kappa}{2}} \bigg \rceil.
\end{equation}
\end{proposition}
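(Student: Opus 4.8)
The plan is to prove the two inclusions separately and then read off the bound on $\nu$ from the contraction of the angle sequence. For the inclusion $L^3 \subseteq P^\nu(\theta)$, I would build explicit certificates. Given $x \in L^3$, set $\xi^0 = |x_1|$, $\eta^0 = |x_2|$, define $\xi^j$ by the equality \eqref{eq:stepjx1}, and set $\eta^j := |-\sin(\theta_j)\xi^{j-1} + \cos(\theta_j)\eta^{j-1}|$ so that \eqref{eq:stepjx2} holds with equality. The key is a geometric reading: viewing $(\xi^j,\eta^j)$ as a planar vector of length $r_j := \sqrt{(\xi^j)^2+(\eta^j)^2}$ and polar angle $\phi_j \in [0,\pi/2]$, each step rotates the vector clockwise by $\theta_j$ and then reflects it back above the $\xi$-axis; since both operations preserve length, $r_j = r_0 = \sqrt{x_1^2+x_2^2}$ for all $j$. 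The central invariant, proved by induction, is $0 \le \phi_j \le \theta_j$. The base case holds because $\theta_0 = \pi/2$ and $(\xi^0,\eta^0)$ lies in the first quadrant. For the step, rotating by $\theta_j$ sends $\phi_{j-1}\in[0,\theta_{j-1}]$ to $\phi_{j-1}-\theta_j \in [-\theta_j,\ \theta_{j-1}-\theta_j]$, and the reflection replaces this by its absolute value, so $\phi_j = |\phi_{j-1}-\theta_j| \le \max\{\theta_j,\ \theta_{j-1}-\theta_j\}$. This is exactly where the lower bound $\theta_j \ge \theta_{j-1}/2$ is consumed: it forces $\theta_{j-1}-\theta_j \le \theta_j$, hence $\phi_j \le \theta_j$. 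With the invariant at $j=\nu$, the terminal constraints follow: $\xi^\nu = r_\nu\cos\phi_\nu \le r_\nu \le x_3$ gives \eqref{eq:stepx3}, and $\tan\phi_\nu \le \tan\theta_\nu$ gives $\eta^\nu = r_\nu\sin\phi_\nu \le \tan(\theta_\nu)\,\xi^\nu$, i.e. \eqref{eq:stepx3_}.

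For the reverse inclusion $P^\nu(\theta) \subseteq \sec(\theta_\nu)L^3$, I would argue by monotonicity of the length. First, all iterates are nonnegative ($\xi^j,\eta^j \ge 0$ by induction from \eqref{eq:step0x1}--\eqref{eq:stepjx2}). Because \eqref{eq:stepjx1} is an equality while \eqref{eq:stepjx2} is only a lower bound, the rotation identity gives $(\xi^j)^2+(\eta^j)^2 \ge (\xi^{j-1})^2+(\eta^{j-1})^2$, so $r_\nu^2 \ge r_0^2 \ge x_1^2+x_2^2$. On the other hand, \eqref{eq:stepx3_} together with $\xi^\nu \ge 0$ yields $(\xi^\nu)^2+(\eta^\nu)^2 \le (1+\tan^2\theta_\nu)(\xi^\nu)^2 = \sec^2(\theta_\nu)\,(\xi^\nu)^2$, and then \eqref{eq:stepx3} gives $r_\nu \le \sec(\theta_\nu)\,\xi^\nu \le \sec(\theta_\nu)\,x_3$. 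Chaining the two bounds produces $\sqrt{x_1^2+x_2^2} \le \sec(\theta_\nu)\,x_3$, which is precisely membership in $\sec(\theta_\nu)L^3$.

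For the choice of $\nu$, I would invoke the upper bound $\theta_j \le \kappa\theta_{j-1}/2$, which telescopes to $\theta_\nu \le (\kappa/2)^\nu\theta_0 = (\kappa/2)^\nu \pi/2$. To obtain a $(1+\delta)$ outer-approximation it suffices that $\sec(\theta_\nu) \le 1+\delta$, i.e. $\theta_\nu \le \arcsec(1+\delta)$; a sufficient condition is $(\kappa/2)^\nu \pi/2 \le \arcsec(1+\delta)$. Taking logarithms and dividing by $\log(\kappa/2) < 0$ (which reverses the inequality, using $\kappa \in [1,2)$) gives exactly \eqref{eq:approxGuarantee} after taking the ceiling.

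I expect the main obstacle to be the inductive proof of the invariant $0 \le \phi_j \le \theta_j$: specifically, justifying the ``rotate-then-reflect'' description of a single step cleanly and identifying precisely where each hypothesis on $\theta_j$ is used (the lower bound for the invariant in the forward inclusion, the upper bound for the size estimate). The remaining steps are routine trigonometric estimates.
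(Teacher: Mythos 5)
Your proof is correct and takes essentially the same route as the paper: the same explicit certificate for $L^3 \subseteq P^\nu(\theta)$, the same norm-monotonicity argument (equality in \eqref{eq:stepjx1} plus the rotation identity) for $P^\nu(\theta) \subseteq \sec(\theta_\nu)L^3$, and the same telescoped bound $\theta_\nu \le \frac{\pi}{2}(\kappa/2)^\nu$ to derive \eqref{eq:approxGuarantee}. The only difference is that you spell out the inductive invariant $0 \le \phi_j \le \theta_j$ and pinpoint where the hypothesis $\theta_j \ge \theta_{j-1}/2$ is consumed, a detail the paper compresses into one sentence by citing Ben-Tal and Nemirovski's Proposition 2.1.
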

\begin{proof}
We first prove that $L^3 \subseteq P^\nu (\theta) $. Let $x\in L^3$ and consider $(\xi^j,\eta^j)$, $j=0,\dots,n$ defined  as follows:
\[
(\xi^j,\eta^j) = 
\begin{cases}
(|x_1|, |x_2|) & \text{ if } j=0 \\
(\cos (\theta_j) \xi^{j-1} + \sin (\theta_j) \eta^{j-1}, |-\sin (\theta_j) \xi^{j-1} + \cos (\theta_j) \eta^{j-1}|) & \text{ if } j =1,\dots,\nu.
\end{cases}
\]
By construction, $(\xi^j,\eta^j)_{j=0}^\nu$ satisfy constraints \eqref{eq:step0x1}--\eqref{eq:stepjx2}. Moreover, due to the  choice of $\theta_j$'s, the angle of the vector with end point $(\xi^j,\eta^j)$ is at most $\theta_j$, for $j=1,\dots,\nu$.  Combining this with the fact that $x_3 \ge \|(x_1,x_2)\|_2 = \|(\xi^j,\eta^j)\|_2$, for $j=0,\dots,\nu$, we conclude that constraints \eqref{eq:stepx3}--\eqref{eq:stepx3_} are also satisfied. 

We next prove that $ P^\nu (\theta) \subseteq {\sec(\theta_\nu)} L^3$. Let  $(x_1,x_2,x_3,\xi^j,\eta^j)$, $j=0,\dots,\nu$ satisfy  system~\eqref{eq:outer}. Then, due to constraints \eqref{eq:step0x1}--\eqref{eq:stepjx2}, we have that 
\[
\|(x_1,x_2)\|_2 \le \|(\xi^0,\eta^0)\|_2 \le \|(\xi^1,\eta^1)\|_2 \le  \cdots \le \|(\xi^\nu,\eta^\nu)\|_2.
\]
Moreover, constraints \eqref{eq:stepx3}--\eqref{eq:stepx3_} imply that $\|(\xi^\nu,\eta^\nu)\| \le \sqrt{1+\tan^2 (\theta_\nu)} x_3$, proving that $\|(x_1,x_2)\|_2 \le {\sec (\theta_\nu)} x_3$.

Finally, due to the fact that $\theta_\nu \le \frac\pi2(\frac{\kappa}{2})^\nu$,  the smallest $\nu$ that guarantees the condition  ${\sec (\theta_\nu)}  \le 1+\delta$ is given in \eqref{eq:approxGuarantee}.
\end{proof}
It is proven in  \cite{ben2001polyhedral} that the smallest size (up to a constant factor) extended formulation to achieve a $(1+\delta)$-approximation of $L^3$ can be obtained by choosing
\begin{equation}\label{eq:nuOptimal}
\nu_\delta =  \left\lceil  {\log_2 \frac{\pi}{2\arcsec(1+\delta)}}  \right \rceil, \ \theta_j = \frac{\pi}{2^{j+1}}, j=1,\dots,\nu_\delta \text{ and } \kappa=1.
\end{equation}
However, under this set of choices, the polyhedral cone $P^{\nu_\delta} (\theta)$ given by  system \eqref{eq:outer} contains some irrational coefficients (e.g. $\cos(\theta_1) = 1/\sqrt{2}$). We will now show that a rational $(1+\delta)$-approximation with the same size can also be obtained. We start with a lemma.

\begin{lemma}\label{lem:rationalSineCosine}
Let $0 < \underline \theta < \overline \theta < \frac\pi2$. Then, there exists $\theta \in [ \underline \theta , \overline \theta ]$ such that $\cos(\theta) \in \mathbb{Q}$ and $\sin(\theta) \in \mathbb{Q}$.
\end{lemma}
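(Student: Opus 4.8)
The plan is to reduce the lemma to the density of rational points on the unit circle. The key observation is that a point $(\cos\theta,\sin\theta)$ on the circle has both coordinates rational precisely when it arises from the standard rational parametrization. Concretely, via the half-angle (Weierstrass) substitution $t=\tan(\theta/2)$, one has
\[
\cos\theta = \frac{1-t^2}{1+t^2}, \qquad \sin\theta = \frac{2t}{1+t^2}.
\]
Therefore, whenever $t \in \mathbb{Q}$, both $\cos\theta$ and $\sin\theta$ are automatically rational. So the task becomes: find a rational value of $t$ whose corresponding angle $\theta = 2\arctan(t)$ falls inside the prescribed interval $[\underline\theta,\overline\theta]$.

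For this I would exploit monotonicity and density. On the range of interest, $\theta \in (0,\pi/2)$ gives $\theta/2 \in (0,\pi/4)$, so the map $\theta \mapsto \tan(\theta/2)$ is continuous and strictly increasing, sending $(0,\pi/2)$ onto $(0,1)$; in particular $\tan(\theta/2)$ is well defined (no pole) throughout, which avoids any boundary subtleties. Applying this map to the endpoints, the image of $[\underline\theta,\overline\theta]$ is the nondegenerate closed interval $[\tan(\underline\theta/2),\,\tan(\overline\theta/2)] \subseteq (0,1)$, whose length is strictly positive because $\underline\theta < \overline\theta$. Since $\mathbb{Q}$ is dense in $\mathbb{R}$, this interval contains some rational number $t$. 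Setting $\theta := 2\arctan(t)$ then yields $\theta \in [\underline\theta,\overline\theta]$, and the displayed formulas give $\cos\theta,\sin\theta \in \mathbb{Q}$, completing the argument.

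I do not anticipate a genuine obstacle here; the whole proof hinges on recalling the correct parametrization, after which monotonicity and density finish things off in a few lines. The only point requiring a little care is confirming that the substitution is valid (i.e.\ the tangent has no singularity on the relevant range) and that the image interval is truly nondegenerate, both of which are immediate from the strict inequality $\underline\theta < \overline\theta$ and the fact that the interval sits strictly inside $(0,\pi/2)$. An alternative, equivalent route would be to invoke directly the classical fact that rational points are dense on the unit circle and intersect any nonempty open arc, but phrasing it through the explicit $t$-parametrization makes the rationality of both coordinates transparent and keeps the proof self-contained.
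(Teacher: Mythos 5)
Your proof is correct and is essentially the paper's argument in a different guise: the paper parametrizes rational points on the circle by Pythagorean triples $\left[m^2-n^2,\ 2mn,\ m^2+n^2\right]$ and finds integers $m,n$ with $\sin(\underline\theta)\le \frac{m^2-n^2}{m^2+n^2}\le\sin(\overline\theta)$ by rearranging to a condition $\frac{m}{n}\in[L,U]$ and invoking density of $\mathbb{Q}$, which is exactly your tangent half-angle parametrization (with $t$ a ratio of integers, up to exchanging the roles of sine and cosine) followed by the same density argument. Your version is marginally more streamlined in that you constrain $t=\tan(\theta/2)$ directly in the nondegenerate interval $\left[\tan(\underline\theta/2),\tan(\overline\theta/2)\right]$ rather than solving for the ratio $m/n$, but the key idea and the closing step coincide with the paper's.
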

\begin{proof}
We will prove this lemma using the fact that any integer Pythagorean triple can be written in the form
\[
[ m^2-n^2, \ 2mn, \ m^2+n^2],
\]
where $m$ and $n$ are positive integers with $m > n$. To find an angle $\theta$ with rational sine and cosine values, it suffices to find positive integers $m$ and $n$ such that
\begin{equation*}\label{eq:sinThetaBound}
\sin (\underline \theta) \le \frac{m^2-n^2}{m^2+n^2} \le \sin(\overline \theta).
\end{equation*}
By rearranging terms, we obtain
\begin{equation}\label{eq:sinThetaBoundRearranged}
\sqrt{ \frac{1+\sin (\underline \theta)}{1-\sin (\underline \theta)} } n \le m \le \sqrt{ \frac{1+\sin (\overline \theta)}{1-\sin (\overline \theta)} } n .
\end{equation}
Since $\underline \theta < \overline \theta $, one can show that $ L :=\sqrt{ \frac{1+\sin (\underline \theta)}{1-\sin (\underline \theta)} } < \sqrt{ \frac{1+\sin (\overline \theta)}{1-\sin (\overline \theta)} } =: U$. Finally, due to the fact that rational numbers are dense in the real numbers, we conclude that there exist positive integers $m$ and $n$ such that $\frac{m}{n} \in [L,U]$.
\end{proof}

We now state our main result.
\begin{proposition}\label{thm:outerL3rational}
For given $\delta>0$, let $\nu_\delta$   be chosen according to~\eqref{eq:nuOptimal} 
 and $\theta_0 = \pi/2$.
 Assume that $ {\log_2 \frac{\pi}{2\arcsec(1+\delta)}} \not\in \mathbb{Z}$.
Then, there exists a set of values $\theta_j$ with $\sin(\theta_j)\in\mathbb{Q}$ and $\cos(\theta_j)\in\mathbb{Q}$, $j=1,\dots,\nu$ such that a rational $(1+\delta)$-approximation $P^{\nu_\delta} (\theta)$ of $L^3$ can be obtained.
\end{proposition}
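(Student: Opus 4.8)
The plan is to exploit the strict slack created by the non-integrality hypothesis: instead of the optimal-but-irrational choice $\kappa=1$ (which forces $\theta_j=\pi/2^{j+1}$ and hence irrational coefficients), I would find some $\kappa^\ast\in(1,2)$ that still certifies a $(1+\delta)$-approximation in exactly $\nu_\delta$ steps, and then use Lemma~\ref{lem:rationalSineCosine} to place rational angles inside the resulting nondegenerate intervals. To justify such a $\kappa^\ast$, I would study the map $\kappa\mapsto\nu(\delta,\kappa)$ from \eqref{eq:approxGuarantee}. Setting $g(\kappa):=\log\frac{2\arcsec(1+\delta)}{\pi}\big/\log\frac{\kappa}{2}$, the numerator is a negative constant while $\log(\kappa/2)<0$ increases to $0^-$ as $\kappa\uparrow 2$, so $g$ is continuous and strictly increasing on $[1,2)$ with $g(1)=\log_2\frac{\pi}{2\arcsec(1+\delta)}$. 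The assumption $\log_2\frac{\pi}{2\arcsec(1+\delta)}\notin\mathbb{Z}$ forces $g(1)\in(\nu_\delta-1,\nu_\delta)$ strictly, whence $\lceil g(1)\rceil=\nu_\delta$. By continuity there is $\bar\kappa>1$ with $g(\kappa)<\nu_\delta$ for all $\kappa\in[1,\bar\kappa)$; fixing any $\kappa^\ast\in(1,\bar\kappa)$ gives $g(\kappa^\ast)\in(\nu_\delta-1,\nu_\delta)$ and therefore $\nu(\delta,\kappa^\ast)=\lceil g(\kappa^\ast)\rceil=\nu_\delta$.

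Next I would construct the angles recursively. Starting from $\theta_0=\pi/2$, suppose $\theta_{j-1}\in(0,\pi/2)$ has been chosen. The target interval $[\theta_{j-1}/2,\ \kappa^\ast\theta_{j-1}/2]$ is nondegenerate since $\kappa^\ast>1$, and its right endpoint satisfies $\kappa^\ast\theta_{j-1}/2<\theta_{j-1}<\pi/2$ since $\kappa^\ast<2$; hence $0<\theta_{j-1}/2<\kappa^\ast\theta_{j-1}/2<\pi/2$, so Lemma~\ref{lem:rationalSineCosine} applies and yields $\theta_j\in[\theta_{j-1}/2,\ \kappa^\ast\theta_{j-1}/2]$ with $\sin(\theta_j),\cos(\theta_j)\in\mathbb{Q}$. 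Iterating for $j=1,\dots,\nu_\delta$ produces angles that by construction obey the hypothesis $\theta_j\in[\theta_{j-1}/2,\kappa^\ast\theta_{j-1}/2]$ of Proposition~\ref{thm:outerL3general}.

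Finally I would assemble the conclusion. Applying Proposition~\ref{thm:outerL3general} with this $\kappa^\ast$ and these angles gives $L^3\subseteq P^{\nu_\delta}(\theta)\subseteq\sec(\theta_{\nu_\delta})L^3$. Since $\theta_j\le\kappa^\ast\theta_{j-1}/2$ yields $\theta_{\nu_\delta}\le\frac{\pi}{2}(\kappa^\ast/2)^{\nu_\delta}$, and $\nu(\delta,\kappa^\ast)=\nu_\delta$ certifies $\frac{\pi}{2}(\kappa^\ast/2)^{\nu_\delta}\le\arcsec(1+\delta)$, monotonicity of the secant on $(0,\pi/2)$ gives $\sec(\theta_{\nu_\delta})\le 1+\delta$, so $P^{\nu_\delta}(\theta)$ is a $(1+\delta)$-approximation of the optimal size $\nu_\delta$ from \eqref{eq:nuOptimal}. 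Rationality is then immediate, because every coefficient appearing in system~\eqref{eq:outer} is one of $\cos(\theta_j)$, $\sin(\theta_j)$, or $\tan(\theta_{\nu_\delta})=\sin(\theta_{\nu_\delta})/\cos(\theta_{\nu_\delta})$, all of which lie in $\mathbb{Q}$ by construction.

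The main obstacle, and the only genuinely nontrivial step, is the first one: converting the non-integrality hypothesis into an actual interval of admissible $\kappa^\ast>1$ via the continuity and strict monotonicity of $g$. Once that slack is secured, the remaining steps are direct invocations of Lemma~\ref{lem:rationalSineCosine} and Proposition~\ref{thm:outerL3general}. I would also verify the boundary edge case of the first recursion step, where $\theta_0=\pi/2$ lies on the boundary of $(0,\pi/2)$; this causes no difficulty because only $\theta_0/2=\pi/4$ and $\kappa^\ast\pi/4<\pi/2$ enter the interval to which the lemma is applied, keeping it strictly interior.
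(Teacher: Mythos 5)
Your proof is correct and follows essentially the same route as the paper: both arguments convert the non-integrality hypothesis into the strict slack $\kappa>1$ while keeping $\nu(\delta,\kappa)=\nu_\delta$, then fill each nondegenerate interval $[\theta_{j-1}/2,\,\kappa\theta_{j-1}/2]$ with rational angles via Lemma~\ref{lem:rationalSineCosine} and conclude by Proposition~\ref{thm:outerL3general}. The only cosmetic difference is that the paper exhibits the largest admissible value in closed form, $\kappa_\delta = 2\left(\frac{2}{\pi}\arcsec(1+\delta)\right)^{1/\nu_\delta}$, whereas you establish the existence of some $\kappa^\ast>1$ through a continuity and monotonicity argument on $g$.
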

\begin{proof}
First of all, we choose the largest parameter $\kappa$   that   satisfies the condition $ \sec\big(\frac\pi2(\frac{\kappa}{2})^{\nu_\delta}\big) \le 1+\delta $, and denote it as  
\begin{equation}\label{eq:kappaOptimal}
\kappa_\delta = 2 \left( \frac{2}{\pi} \arcsec(1+\delta) \right)^{1/\nu_\delta}.
\end{equation}
Note that   $\kappa_\delta > 1$ since we assume that  $ {\log_2 \frac{\pi}{2\arcsec(1+\delta)}} \not\in \mathbb{Z}$.

Due to Lemma  \ref{lem:rationalSineCosine},  there exists an angle $\theta_j$ with $\sin(\theta_j), \cos(\theta_j)\in\mathbb{Q}$  such that $\theta_j \in [\theta_{j-1}/2, {\kappa_\delta}\theta_{j-1}/2]$, $j=1,\dots,\nu_\delta$. Therefore, the approximation $P^{\nu_\delta} (\theta)$  is a rational polyhedral cone. Finally, since
$\kappa_\delta$ and $\nu_\delta$ satisfy the condition that $ \sec\big(\frac\pi2(\frac{\kappa_\delta}{2})^{\nu_\delta}\big) \le 1+\delta $, we achieve a $(1+\delta)$-approximation of the cone $L^3$ due to Proposition~\ref{thm:outerL3general}.
%
%
%
\end{proof}

\subsection{An Optimized Outer-Approximation for  $L^3$}
\label{sec:optimizedConstruction}

In this section, we will first reformulate the outer-approximation  $P^\nu (\theta)$ as a polyhedral set defined by integral data. Then, we will propose a way to obtain the {smallest} integer coefficients that can be used in this reformulation.

For $\theta_j \in (0,\pi/2)$ with rational sine and cosine, we can write 
\[
\sin(\theta_j) = \frac{a_j}{c_j} \text{ and } \cos(\theta_j) = \frac{b_j}{c_j},
\]
for some positive coprime integers $a_j, b_j, c_j$. 
By rewriting  equations \eqref{eq:stepjx1}, \eqref{eq:stepjx2} and \eqref{eq:stepx3_}  respectively  as 
\begin{subequations} \label{eq:outerReform}
\begin{align}
c_j \xi^j  &= \ \ \ \ \  b_j \xi^{j-1} + a_j \eta^{j-1} \ \quad j=1,\dots,\nu \label{eq:stepjx1R} \\
c_j \eta^j  &\ge |-a_j \xi^{j-1} + b_j \eta^{j-1}| \quad j=1,\dots,\nu \label{eq:stepjx2R} \\
b_\nu \eta^\nu & \le  a_\nu \xi^\nu \label{eq:stepx3_R},
\end{align}
\end{subequations}
we reformulate the polyhedral set  $P^\nu (\theta)$ as
\begin{equation}\label{eq:outerInteger}
\tilde P^\nu (\theta):=\{ x\in\mathbb{R}^3: \ \exists (\xi^j,\eta^j)_{j=0}^\nu: \  \eqref{eq:step0x1}-\eqref{eq:step0x2}, \eqref{eq:stepx3}, \eqref{eq:outerReform} \}.
\end{equation}
Note that the polyhedral set $\tilde P^\nu (\theta)$ is now defined by integral data.

Although Proposition \ref{thm:outerL3rational} implies that the outer-approximation $\tilde P^{\nu_\delta} (\theta)$ can be defined by integral data, it does not give an explicit construction. In the remainder of this section, we provide an \textit{optimized} construction in the sense that the integers used in the formulation are as small as possible. This is a desired property since it is known that the computational solution methods for optimization problems with very large coefficients are subject to numerical issues. 

The exact procedure to obtain the coefficients is summarized in Algorithm \ref{alg:optConst}. In each step of the algorithm, we minimize the hypotenuse of the next Pythagorean triple, which seems a reasonable approach to obtain a formulation with small coefficients. We use the Python programming language to implement Algorithm \ref{alg:optConst} and Gurobi  to solve the integer quadratic programs (we tighten the default numerical tolerances of the solver to obtain accurate results).
\begin{algorithm}
\caption{Optimized construction of $\tilde P^{\nu_\delta} (\theta)$ given $\delta > 0$.}
\label{alg:optConst}
\begin{algorithmic}
\STATE Compute $\nu_\delta$ according to \eqref{eq:nuOptimal}  and choose $\kappa \in (1, \kappa_\delta]$, where $ \kappa_\delta$ is defined as in \eqref{eq:kappaOptimal}.
\STATE Initialize $\theta_0 = \pi/2$.
\FOR{$j =1, \dots, \nu_\delta$}
\STATE Set $\underline \theta = \theta_{j-1}$ and $\overline \theta = \kappa \theta$.
\STATE Obtain
\[
(m_s, n_s) := \argmin_{ m\in \mathbb{Z},n \in \mathbb{Z} }\{ m^2+n^2 : \ \eqref{eq:sinThetaBoundRearranged}, n \ge 1 \}
\text{ and }
(m_c, n_c) := \argmin_{ m\in \mathbb{Z},n \in \mathbb{Z} }\{ m^2+n^2 : \ \eqref{eq:cosThetaBoundRearranged}, n \ge 1 \}.
\]
where
\begin{equation}\label{eq:cosThetaBoundRearranged}
\sqrt{ \frac{1+\cos (\overline \theta)}{1-\cos (\overline \theta)} } n \le m \le \sqrt{ \frac{1+\cos (\underline \theta)}{1-\cos (\underline \theta)} } n .
\end{equation}

\IF{ $m_s^2+ n_s^2 < m_c^2+ n_c^2$ }
\STATE Set $ a_j = m_s^2 - n_s^2 $, $b_j = 2m_s n_s$, $c_j = m_s^2 + n_s^2$.
\ELSE
\STATE Set $ a_j =  2 m_c n_c$, $b_j =m_c^2 - n_c^2 $, $c_j = m_c^2 + n_c^2$.
\ENDIF

\STATE
Set $\theta_j= \arcsec(c_j/b_j)$. 

\ENDFOR

\end{algorithmic}
\end{algorithm}

Some sample results are provided in Table \ref{eq:optConstruct-4-6} for $\delta=10^{-5}, 10^{-6}, 10^{-7} $. We remark that that the optimal coefficients for the same index $j$ might be different depending on the value of $\delta $. Also, it is interesting to observe  that all the Pythagorean triples for $j\ge4$ are one of the following two forms for some positive integer $h$:
\begin{equation}\label{eq:twoTypes}
i) \ [2h, h^2-1, h^2+1], \quad ii) \ [2h-1,2h^2-2h, 2h^2-2h+1].
\end{equation}
This observation will be exploited in Section \ref{sec:heuristicConstruction}, where we provide a different approximation scheme to $L^3$ with explicitly chosen Pythagorean triples.

\begin{table}[H]\footnotesize
\caption{Optimal constructions of the polyhedral cone $\tilde P^{\nu_\delta} (\theta)$ for different $\delta$ values. We have chosen $\kappa= (1-10^{-6})\kappa_\delta$, where $\kappa_\delta$ is defined as in \eqref{eq:kappaOptimal}, to circumvent the possible numerical precision issues.}\label{eq:optConstruct-4-6}
\begin{tabular}{c|cc|cc|cc}
\toprule
           & \multicolumn{ 2}{c|}{$\delta=10^{-5}$ ($\nu_\delta=9$)} & \multicolumn{ 2}{c|}{$\delta=10^{-6}$ ($\nu_\delta=11$)} & \multicolumn{ 2}{c}{$\delta=10^{-7}$ ($\nu_\delta=12$)} \\

      $j $ &  $\sec(\theta_j)$ &   $a_j,b_j,c_j$ & $\sec(\theta_j)$ &   $a_j,b_j,c_j$ & $\sec(\theta_j)$ &   $a_j,b_j,c_j$ \\
\midrule
         1 &   1.450000 &   21,20,29 &  1.4500000 &   21,20,29 & 1.42016807 & 120,119,169 \\

         2 &   1.096257 & 84,187,205 &  1.0962567 & 84,187,205 & 1.08333333 &    5,12,13 \\

         3 &   1.023226 & 168,775,793 &  1.0250000 &    9,40,41 & 1.02020202 &  20,99,101 \\

         4 &   1.006192 & 36,323,325 &  1.0061920 & 36,323,325 & 1.00501253 & 40,399,401 \\

         5 &   1.001634 & 35,612,613 &  1.0016340 & 35,612,613 & 1.00125078 & 80,1599,1601 \\

         6 &   1.000420 & 69,2380,2381 &  1.0004456 & 67,2244,2245 & 1.00032051 & 79,3120,3121 \\

         7 &   1.000113 & 133,8844,8845 &  1.0001240 & 127,8064,8065 & 1.00008114 & 157,12324,12325 \\

         8 &   1.000030 & 257,33024,33025 &  1.0000344 & 241,29040,29041 & 1.00002068 & 311,48360,48361 \\

         9 &   1.000008 & 493,121524,121525 &  1.0000096 & 457,104424,104425 & 1.00000529 & 615,189112,189113 \\

        10 &            &            &  1.0000027 & 865,374112,374113 & 1.00000135 & 1215,738112,738113 \\

        11 &            &            &  1.0000007 & 1637,1339884,1339885 & 1.00000035 & 2401,2882400,2882401 \\

        12 &            &            &            &            & 1.00000009 & 4741,11238540,11238541 \\

\bottomrule
\end{tabular}  
\end{table}

%
%
%
%
%
%
%
%
%
%
%
%
%

\subsection{Rational Outer-Approximation of  $L^N$}
\label{sec:extendLn}

In this section, we will use the rational outer-approximation of $L^3$ as a building block to obtain a rational outer-approximation of $L^N$. We remark that  the  \textit{tower-of-variables} construction used in this section has originally appeared in \cite{ben2001polyhedral} and we only present it here  in order to make the present paper self-contained.
 For convenience, we will assume  that $N=2^K+1$ for some $K\in\mathbb{Z}_{++}$  (otherwise, we can include additional variables which are equal to zero).

Let us define
\begin{subequations}\label{eq:tower}
\begin{align}
& x_{i} = y_{0,i}, x_N=y_{K,1}  &i& = 1,\dots, N-1 \label{eq:towerLinear} \\
&   (y_{k,2i-1} , y_{k,2i}, y_{k+1,i}) \in L^3  \ &k&=0,\dots,K-1, i=1,\dots, 2^{K-k-1}. \label{eq:towerCone}
\end{align}
\end{subequations}
For $L^N$, an extended formulation  can be given as
$
L^N = \{ x\in\mathbb{R}^{N} : \exists y_{k,i} : \eqref{eq:tower}\},
$
and for  $\delta>0$, an outer-approximation can be obtained as
\[
Q^N (\theta) := \{x\in\mathbb{R}^{n} : \exists y_{k,i} : \eqref{eq:towerLinear} , (y_{k,2i-1}, y_{k,2i}, y_{k+1,i}) \in  \tilde P^{\nu_\delta}(\theta)  \ k=0,\dots,K-1, i=1,\dots, 2^{K-k-1} \},
\]
where $ \tilde P^{\nu_\delta}$ is defined as in Section \ref{sec:optimizedConstruction}.
It is straightforward to show that 
\[
L^N \subseteq Q^N(\theta)  \subseteq (1+\delta)^K L^N.
\]
Finally, to achieve a $(1+\epsilon)$-approximation of the cone $L^N$, we can construct a $(1+\delta)$-approximation for $L^3$ such that
\begin{equation}\label{eq:deltaEpsilon}
1+\epsilon \ge (1+\delta)^K \iff \delta\le (1+\epsilon)^{1/K} - 1.
\end{equation}
Let us denote the largest $\delta$ value satisfying this relation as $\delta_\epsilon$. 
Then, $Q^N(\theta) $ would require $(2\nu_{\delta_\epsilon}+3)(N-2)$ many additional variables and  $(3\nu_{\delta_\epsilon}+6)(N-2)$ many constraints in its description (see system \eqref{eq:outer}), where 
\[
\nu_{\delta_\epsilon} =  \left\lceil  {\log_2 \frac{\pi}{2\arcsec\big((1+\epsilon)^{1/K}\big)}}  \right \rceil ,
\]
is computed according to \eqref{eq:nuOptimal}.

We remark that the  {tower-of-variables} construction is not the only way to obtain an extended formulation for $L^N$ that uses $L^3$ as a building block. In fact, one can also use a \textit{disaggregation} approach (see \cite{ben2001lectures, vielma2017extended}). Let us define the system
\begin{subequations}\label{eq:disaggreg}
\begin{align}
&   x_i^2 \le z_i x_N   \ &i&=1,\dots, N-1 \label{eq:disaggregCone} \\
&  \sum_{i=1}^{N-1} z_{i} \le x_N  \label{eq:disaggregLinear} \\
& x_N \ge 0,   z_i \ge 0   \ &i&=1,\dots, N-1 \label{eq:disaggregNonneg},
\end{align}
\end{subequations}
and obtain another extended formulation  as 
$
L^N = \{ x\in\mathbb{R}^{N} : \exists z_{i} : \eqref{eq:disaggreg}\}.
$
Note that  the rotated cone constraints \eqref{eq:disaggregCone} are linear transformations of $L^3$, hence, admit efficient polyhedral outer-approximations.

%
%
%
%
%

\section{Second  Outer-Approximation: A Closed-Form Construction with Small-Size Coefficients  {Given the Desired Accuracy}}
\label{sec:rationalOuterHeur}



In this section, we present another rational outer-approximation of $L^N$ with a  different  set of characteristics. 
We will first present our closed-form construction for $L^3$  in 
Section \ref{sec:heuristicConstruction}
and provide an upper bound for the largest coefficient used in its description in
Section \ref{sec:boundCoeff}.
Then, using the tower-of-variables construction again, we will obtain a rational polyhedral outer-approximation for~$L^N$ with an explicit upper bound on its largest coefficient in
Section \ref{sec:extendLn'}.

\subsection{A Closed-Form Outer-Approximation for  $L^3$}
\label{sec:heuristicConstruction}


In this section, we will consider the Pythagorean triples defined as
\begin{equation}\label{eq:heurConstruction}
(\hat a_j, \hat b_j, \hat c_j) = \begin{cases}
(120, \ 119, \ 169) & \text{ if } j=1 \\
(2h_j-1,2h_j^2-2h_j, 2h_j^2-2h_j+1 )  \text{ with } h_j := 2^{j-2}+2 & \text{ if } j =2, \dots, \nu,
\end{cases}
\end{equation}
and the corresponding angles
\begin{equation}\label{eq:heurConstructionAngle}
\hat \theta_j := \arcsec   ( {\hat c_j}/{ \hat b_j }  ), \ j=1,\dots,\nu.
\end{equation}
Notice that $(\hat a_j, \hat b_j, \hat c_j) $ are of the  type (ii) according to~\eqref{eq:twoTypes}  for $j=2,\dots,\nu$, and we have the recursion $h_j = 2(h_{j-1}-1)$ for $j=3,\dots,\nu$. 

Our main result in this section is the following proposition: 
\begin{proposition}\label{thm:heurConstruction}
Let $\delta \in (0,\frac14)$ and $\hat \theta =\pi/2$.
Consider $\tilde P^{\hat \nu_\delta} ( \hat \theta)$ with 
\begin{equation} \label{eq:hatNuDelta}
\hat \nu_\delta = \left\lceil \log_2 \left (-6+2\sqrt{1+2/\delta} \right) \right \rceil,
\end{equation}
and $\hat \theta_j$, $j=1,\dots, \hat \nu_\delta$ as defined in \eqref{eq:heurConstructionAngle}. 
Then, we have
\[
L^3 \subseteq \tilde P^{\hat \nu_\delta} ( \hat \theta) \subseteq   (1+\delta) L^3.
\]
Moreover, 
\[
\hat \nu_\delta  \le  \nu_\delta + 2.
\]
\end{proposition}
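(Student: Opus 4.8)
The plan is to obtain both inclusions from Proposition~\ref{thm:outerL3general} applied to the angle sequence $\hat\theta_1,\dots,\hat\theta_{\hat\nu_\delta}$; recall that $\tilde P^{\hat\nu_\delta}(\hat\theta)$ and $P^{\hat\nu_\delta}(\hat\theta)$ have the same projection onto $x$, so it is enough to reason about the latter. The inclusion $P^{\hat\nu_\delta}(\hat\theta)\subseteq \sec(\hat\theta_{\hat\nu_\delta})\,L^3$ holds with no hypothesis on the angles (it is exactly the second inclusion established there). For the accuracy I would compute the secant in closed form from the type-(ii) triple: for $j\ge 2$, $\sec\hat\theta_j=\hat c_j/\hat b_j=1+\frac{1}{2h_j(h_j-1)}$. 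Rearranging~\eqref{eq:hatNuDelta}, the bound $2^{\hat\nu_\delta}\ge -6+2\sqrt{1+2/\delta}$ gives $h_{\hat\nu_\delta}=2^{\hat\nu_\delta-2}+2\ge \frac12\big(1+\sqrt{1+2/\delta}\big)$, which is equivalent to $2h_{\hat\nu_\delta}(h_{\hat\nu_\delta}-1)\ge 1/\delta$, i.e.\ $\sec\hat\theta_{\hat\nu_\delta}\le 1+\delta$. Since $sL^3\subseteq tL^3$ whenever $1\le s\le t$, this yields $\tilde P^{\hat\nu_\delta}(\hat\theta)\subseteq(1+\delta)L^3$.

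For the reverse inclusion $L^3\subseteq \tilde P^{\hat\nu_\delta}(\hat\theta)$, the proof of Proposition~\ref{thm:outerL3general} uses only the lower endpoint of the admissible interval, namely $\hat\theta_{j-1}\le 2\hat\theta_j$ for each $j$, so that is all I would verify, in three cases. For $j=1$ we have $\hat\theta_0=\pi/2$, so the requirement is $\hat\theta_1\ge\pi/4$, which follows from $\sec\hat\theta_1=169/119>\sqrt2$ (square both sides: $169^2=28561>28322=2\cdot119^2$). The key simplification for the remaining cases is the half-angle identity for a type-(ii) triple, $\tan(\hat\theta_j/2)=\frac{\sin\hat\theta_j}{1+\cos\hat\theta_j}=\frac{1}{2h_j-1}$. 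For $j=2$ a direct computation gives $\tan(\hat\theta_1/2)=5/12=\tan\hat\theta_2$, so in fact $\hat\theta_1=2\hat\theta_2$ (equivalently, $(120,119,169)$ is the angle-double of $(5,12,13)$). For $j\ge 3$, substituting the recursion $h_j=2(h_{j-1}-1)$ turns $\tan(\hat\theta_{j-1}/2)\le\tan\hat\theta_j$ into $\frac{1}{2h_{j-1}-1}\le\frac{2h_j-1}{2h_j(h_j-1)}$, which after clearing the (positive) denominators collapses to $6h_{j-1}\ge 7$, trivially true.

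For the size comparison $\hat\nu_\delta\le\nu_\delta+2$, since both quantities are ceilings and $2$ is an integer it suffices to show $\log_2\big(-6+2\sqrt{1+2/\delta}\big)\le \log_2\frac{\pi}{2\arcsec(1+\delta)}+2$, i.e.\ $g(\delta):=\big(2\sqrt{1+2/\delta}-6\big)\arcsec(1+\delta)\le 2\pi$ on $(0,\tfrac14)$. I would bound the two factors separately: first $2\sqrt{1+2/\delta}-6<2\sqrt{(2+\delta)/\delta}<3/\sqrt{\delta}$, using $2+\delta<9/4$; second $\arcsec(1+\delta)\le\sqrt{2\delta}$, which follows from $\cos\sqrt{2\delta}\le 1-\delta+\frac{\delta^2}{6}\le\frac{1}{1+\delta}$, the first step being the elementary bound $\cos x\le 1-\frac{x^2}{2}+\frac{x^4}{24}$ (valid for all $x$) and the second reducing to $(1-\delta+\frac{\delta^2}{6})(1+\delta)\le1\Leftrightarrow \delta\le5$. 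Multiplying the two bounds gives $g(\delta)<3\sqrt2<2\pi$, and monotonicity of $\lceil\cdot\rceil$ completes the argument.

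The main obstacle, routine once identified, is the angle-halving condition $\hat\theta_{j-1}\le 2\hat\theta_j$. The half-angle identity $\tan(\hat\theta_j/2)=1/(2h_j-1)$ is what makes it painless, collapsing an a priori messy trigonometric inequality into the linear $6h_{j-1}\ge7$; and the deliberate choice of $(120,119,169)$ as the double of $(5,12,13)$ is what simultaneously secures the base case $\hat\theta_1>\pi/4$ and the exact identity $\hat\theta_1=2\hat\theta_2$. One should also note that the closed-form secant used in the accuracy step presupposes $\hat\nu_\delta\ge2$ (so that step $\hat\nu_\delta$ carries a type-(ii) triple), which is the case throughout the regime of interest.
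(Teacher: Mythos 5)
Your proof is correct and follows the paper's argument essentially step for step: the same three-case verification of the halving condition $\hat\theta_{j-1}\le 2\hat\theta_j$ (your $j\ge 3$ case, reducing $\tan(\hat\theta_{j-1}/2)=1/(2h_{j-1}-1)\le\tan\hat\theta_j$ to $6h_{j-1}\ge 7$, is exactly the paper's Lemma~\ref{lem:halfAngle} with $h\ge 7/6$), the same derivation of $\hat\nu_\delta$ from $2h(h-1)\ge 1/\delta$, and the same ceiling-comparison strategy for $\hat\nu_\delta\le\nu_\delta+2$ via bounds of the type $\arcsec(1+\delta)=O(\sqrt{\delta})$. Your only deviations are cosmetic --- establishing $\hat\theta_1=2\hat\theta_2$ by the tangent half-angle identity rather than the paper's sine double-angle computation, and proving $\arcsec(1+\delta)\le\sqrt{2\delta}$ by the Taylor bound $\cos x\le 1-\tfrac{x^2}{2}+\tfrac{x^4}{24}$ instead of the paper's monotonicity argument for Fact~\ref{fact:fact2} --- and your closing caveat that the secant formula presupposes $\hat\nu_\delta\ge 2$ makes explicit an assumption the paper's proof uses silently.
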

We will postpone the proof of Proposition \ref{thm:heurConstruction} to Section~\ref{sec:proofProp3} since we first need some preliminary results.

\subsubsection{Preliminary Results}
We start by establishing a relation between two Pythagorean triples, both of type (ii) according to~\eqref{eq:twoTypes}. 
\begin{lemma}\label{lem:halfAngle}
Let $h \ge 2$ be an integer. Consider the two Pythagorean triples 
\[
[2h-1,2h^2-2h, 2h^2-2h+1] \text{ and } [4h-5, 8h^2-20h+12, 8h^2-20h+13],
\] 
and define 
\[
\phi :=  \arcsec\left ( \frac{2h^2-2h+1}{2h^2-2h} \right)   \text{ and } 
\phi' :=  \arcsec\left ( \frac{8h^2-20h+13}{8h^2-20h+12} \right).
\]
Then, we have $\phi' \ge \phi/2$.
\end{lemma}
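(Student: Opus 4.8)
The plan is to reduce the claim $\phi' \ge \phi/2$ to an elementary polynomial inequality in $h$ by passing to tangents. Since the argument of each $\arcsec$ exceeds $1$, both $\phi$ and $\phi'$ lie in $(0,\pi/2)$, so $\phi/2 \in (0,\pi/4)$ and all three angles sit in the range where $\tan$ is strictly increasing. Hence $\phi' \ge \phi/2$ if and only if $\tan\phi' \ge \tan(\phi/2)$, and I would work exclusively with tangents from that point on.

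Next I would read off the sine and cosine of $\phi$ from the first triple. Since $\sec\phi = (2h^2-2h+1)/(2h^2-2h)$, the leg–hypotenuse ratios of $[2h-1,2h^2-2h,2h^2-2h+1]$ give $\cos\phi = (2h^2-2h)/(2h^2-2h+1)$ and $\sin\phi = (2h-1)/(2h^2-2h+1)$. The crucial simplification is the half-angle identity $\tan(\phi/2) = (1-\cos\phi)/\sin\phi$: here the common denominator $2h^2-2h+1$ cancels, since $1-\cos\phi = 1/(2h^2-2h+1)$, and one obtains the remarkably clean value $\tan(\phi/2) = 1/(2h-1)$.

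For $\phi'$ I would use the second triple directly, giving $\tan\phi' = (4h-5)/(8h^2-20h+12)$. Substituting both expressions, the target $\tan\phi' \ge \tan(\phi/2)$ becomes $(4h-5)/(8h^2-20h+12) \ge 1/(2h-1)$. Both denominators are positive for $h \ge 2$ (indeed $8h^2-20h+12 = 4(2h-3)(h-1) > 0$ and $2h-1 > 0$), so I may cross-multiply without flipping the inequality, reducing the claim to $(4h-5)(2h-1) \ge 8h^2-20h+12$, i.e. $8h^2-14h+5 \ge 8h^2-20h+12$, i.e. $6h \ge 7$. This holds for every integer $h \ge 2$, which finishes the argument.

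I do not anticipate a genuine obstacle: once the tangent half-angle identity collapses $\tan(\phi/2)$ to $1/(2h-1)$, everything reduces to linear and quadratic bookkeeping. The only points requiring a moment of care are (i) verifying that all angles fall in $(0,\pi/2)$, so that the reduction to tangents and the sign of the cross-multiplication are both justified, and (ii) correctly identifying which leg of each Pythagorean triple plays the role of the sine versus the cosine. The cleanness of $\tan(\phi/2)=1/(2h-1)$ is what makes the computation transparent, and it is essentially a reflection of the recursion $h_j = 2(h_{j-1}-1)$ underlying the construction.
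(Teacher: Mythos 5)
Your proposal is correct and follows essentially the same route as the paper's proof: both reduce $\phi' \ge \phi/2$ to $\tan\phi' \ge \tan(\phi/2)$ via monotonicity of $\tan$ on $(0,\pi/2)$, apply the half-angle identity $\tan(\phi/2) = \frac{1-\cos\phi}{\sin\phi} = \frac{1}{2h-1}$, and cross-multiply to reach the equivalent linear condition ($6h \ge 7$, i.e.\ $h \ge \frac{7}{6}$). Your write-up is in fact slightly more careful than the paper's, since you explicitly justify the positivity of the denominators before cross-multiplying.
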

\begin{proof}
Since $\phi, \phi' \in (0,\frac\pi2)$ and $h \ge 2$, we have
\begin{equation*}
\begin{split}
\phi' \ge \frac\phi2 
& \iff  \tan (\phi') \ge \tan \left(\frac\phi2 \right) = \frac{1-\cos(\phi)}{\sin(\phi)}\\
& \iff \frac{4h-5}{8h^2-20h+12} \ge \frac{1}{2h-1}\\
& \iff 8h^2 - 14h + 5 \ge 8h^2-20h+12\\
& \iff h \ge \frac76,
\end{split}
\end{equation*}
which proves the claim.
\end{proof}

%
%
%
%

\begin{lemma}\label{thm:heurConstruction}
Let $\nu \in \mathbb{Z}_{++}$ and $\hat \theta_0 :=\pi/2$. 
Consider $\hat \theta_j$, $j=1,\dots,\nu$ as defined in \eqref{eq:heurConstructionAngle}. Then, we have $\hat \theta_j \ge \hat \theta_{j-1}/2 $, for $j=1,\dots, \nu$.
\end{lemma}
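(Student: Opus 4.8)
The plan is to verify the inequality $\hat\theta_j \ge \hat\theta_{j-1}/2$ separately for the two base indices $j=1,2$, and then, for $j \ge 3$, to reduce it directly to Lemma~\ref{lem:halfAngle}. The reason for the split is that the triple at $j=1$, namely $(120,119,169)$, is not of type (ii) in~\eqref{eq:twoTypes}; the half-angle comparison of Lemma~\ref{lem:halfAngle} only applies once both the $(j-1)$-st and $j$-th triples are of that type, i.e.\ for $j \ge 3$.

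For the general case $j \ge 3$ I would set $h := h_{j-1} = 2^{j-3}+2 \ge 3$ and observe that the triple at index $j-1$ is exactly $[2h-1,\,2h^2-2h,\,2h^2-2h+1]$, so that $\hat\theta_{j-1}$ coincides with the angle $\phi$ of Lemma~\ref{lem:halfAngle}. Using the recursion $h_j = 2(h_{j-1}-1)$ recorded just after~\eqref{eq:heurConstructionAngle}, a short substitution shows that $(\hat a_j,\hat b_j,\hat c_j)=[4h-5,\,8h^2-20h+12,\,8h^2-20h+13]$, which is precisely the second triple of Lemma~\ref{lem:halfAngle}; hence $\hat\theta_j$ equals the angle $\phi'$ there. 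Since $h \ge 2$, Lemma~\ref{lem:halfAngle} yields $\hat\theta_j = \phi' \ge \phi/2 = \hat\theta_{j-1}/2$, as required.

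It then remains to treat $j=1$ and $j=2$. For $j=1$ the claim is $\hat\theta_1 \ge \hat\theta_0/2 = \pi/4$, equivalently $\sec(\hat\theta_1) = 169/119 \ge \sqrt 2$, which follows from the integer comparison $169^2 = 28561 > 28322 = 2\cdot 119^2$. For $j=2$ the relevant triple is $(5,12,13)$ (take $h_2=3$ in~\eqref{eq:heurConstruction}), so $\cos\hat\theta_2 = 12/13$ and $\sin\hat\theta_2 = 5/13$. I would then compute $\cos(2\hat\theta_2) = \cos^2\hat\theta_2 - \sin^2\hat\theta_2 = (144-25)/169 = 119/169 = \cos\hat\theta_1$. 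Since $2\hat\theta_2,\hat\theta_1 \in (0,\pi/2)$ and cosine is injective there, this gives the exact identity $\hat\theta_2 = \hat\theta_1/2$, so the inequality holds, in fact with equality.

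The heavy lifting is already contained in Lemma~\ref{lem:halfAngle}, so no hard estimate remains here; the only genuine subtlety is the bookkeeping of the two base cases forced by the non-standard triple at $j=1$. In particular, the seemingly ad hoc choice $(120,119,169)$ is exactly what makes $j=2$ work out: it is the double-angle triple of $(5,12,13)$, which is precisely why $\hat\theta_2 = \hat\theta_1/2$. Checking this exact match, together with the elementary $\sqrt 2$ comparison for $j=1$, is the part that most requires care.
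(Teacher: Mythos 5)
Your proof is correct and follows essentially the same three-case argument as the paper: direct verification for $j=1$, the exact double-angle identity tying $(5,12,13)$ to $(120,119,169)$ for $j=2$ (the paper uses the sine double-angle where you use cosine, an immaterial difference), and Lemma~\ref{lem:halfAngle} for $j\ge 3$. One small point in your favor: your instantiation $h = h_{j-1} = 2^{j-3}+2$ is the right one, whereas the paper's proof writes $h = 2^{j-2}+2$, a minor index slip, since with that choice the angle $\phi$ of Lemma~\ref{lem:halfAngle} would be $\hat\theta_j$ rather than $\hat\theta_{j-1}$.
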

\begin{proof}
We will prove this statement considering three cases:

\begin{itemize}
\item $j=1$: In this case, we have $\hat \theta_1 = \arctan(120/119) \ge \pi/{\blue4} = \hat \theta_0 / 2$.

\item $j = 2$: In this case, we have $\hat \theta_2 = \hat \theta_1/2$ since 
\[
\sin (2\hat \theta_2) = 2 \sin(\hat\theta_2) \cos(\hat \theta_2) = 2 \cdot \frac{5}{13} \cdot \frac{12}{13} = \frac{120}{169} = \sin(\hat \theta_1).
\]

\item $j \ge 3$: In this case, we apply Lemma \ref{lem:halfAngle} with $h=2^{j-2}+2$, $\phi = \hat\theta_{j-1}$ and $\phi' =\hat \theta_j $.
\end{itemize}
Hence, the result follows.
\end{proof}

\subsubsection{Proof of Proposition \ref{thm:heurConstruction} and Some Implications}
\label{sec:proofProp3}

Now, we are ready to prove the main result of this section.
\begin{proof}[Proof of Proposition \ref{thm:heurConstruction}]
Due to Lemma \ref{thm:heurConstruction}, $\hat \theta_j$'s satisfy the condition that  $\hat \theta_j \ge \hat \theta_{j-1}/2 $, for $j=1,\dots, \hat \nu_\delta$. Therefore, we have $L^3 \subseteq \tilde P^{\hat \nu_\delta} ( \hat \theta)$. To prove the $(1+\delta)$-approximation guarantee, it suffices to choose the smallest integer $\nu$ such that
\[
\arcsec ( \hat \theta_{\nu} ) \le 1+\delta \iff  \frac{2h^2-2h+1}{2h^2-2h} \le 1+\delta \text{ with } h=2^{\nu-2}+2,
\]
or, equivalently, the smallest $h=2^{\nu-2}+2$ with integer $\nu$ such that
\[
2h^2-2h-\frac1\delta \ge 0 \iff h \ge \frac12 + \sqrt{\frac14+\frac{1}{2\delta}}.
\]
By plugging  $h=2^{\nu-2}+2$ and defining the smallest integer $\nu$ satisfying the above inequalities as~$\hat\nu_\delta$, we obtain \eqref{eq:hatNuDelta}.

Finally, we will prove the last assertion of the proposition. In particular, we have
\begin{equation*}
\begin{split}
\hat \nu_\delta  -  \nu_\delta 
& =  \left\lceil \log_2 \left (-6+2\sqrt{1+2/\delta} \right) \right \rceil - \left\lceil  {\log_2 \frac{\pi}{2\arcsec(1+\delta)}}  \right \rceil \\
& \le \left \lceil \log_2 \frac{ -6+2\sqrt{1+2/\delta} }{ \frac{\pi}{2\arcsec(1+\delta)} } \right \rceil \\
& = \bigg \lceil \log_2 \bigg [ \frac4\pi \underbrace{ ( -3+\sqrt{1+2/\delta } )}_{ \le {{\sqrt{2/\delta}}} } \underbrace{ \arcsec(1+\delta)}_{ \le {2}\sqrt{\delta}} \bigg]  \bigg \rceil \\
& \le \bigg \lceil   \log_2 \frac{8\sqrt{2}}{\pi}  \bigg \rceil \\
 & = 2.
\end{split}
\end{equation*}
Here, the first inequality follows since the function $\lceil \cdot \rceil$ is subadditive,  and the second inequality follows due to the following facts:

\begin{fact}\label{fact:fact1}
$  -3+\sqrt{1+2/\delta } \le   {\sqrt{2/\delta}} $ for $\delta > 0$.
\end{fact}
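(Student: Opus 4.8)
The plan is to prove this elementary inequality by a single squaring argument. First I would move the constant $-3$ to the right-hand side, rewriting the claimed bound equivalently as
\begin{equation*}
\sqrt{1+2/\delta} \le 3 + \sqrt{2/\delta}.
\end{equation*}
The advantage of this rearranged form is that both sides are now manifestly nonnegative for every $\delta > 0$, so squaring becomes a reversible, order-preserving operation and no case analysis on the sign of $-3+\sqrt{1+2/\delta}$ is required.

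Next I would square both sides. Writing $t := 2/\delta > 0$ for brevity, the squared inequality reads $1 + t \le 9 + 6\sqrt{t} + t$, and the $t$-terms cancel to leave $1 \le 9 + 6\sqrt{t}$. Since $\sqrt{t} \ge 0$, this last inequality holds trivially, and running the chain of equivalences backwards establishes the fact for all $\delta > 0$.

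There is essentially no substantive obstacle here; the only point demanding a moment's care is the justification that squaring is legitimate. This is precisely why I would prefer the rearranged form $\sqrt{1+2/\delta} \le 3 + \sqrt{2/\delta}$ to squaring the original inequality directly, since the original left-hand side $-3+\sqrt{1+2/\delta}$ can be negative (namely when $\delta > 1/4$), in which case the inequality is immediate but the squaring step would need a separate argument. Once both sides are known to be nonnegative, the computation is a one-line cancellation.
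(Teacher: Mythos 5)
Your proof is correct and follows essentially the same route as the paper's: both rearrange the inequality to $\sqrt{1+2/\delta} \le 3 + \sqrt{2/\delta}$ (the paper's first equivalence is exactly this squared form) and reduce by squaring to a trivially true inequality. Your explicit remark that the rearranged sides are nonnegative, so squaring is an equivalence, is a small point of care the paper leaves implicit.
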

\begin{proof}
Since $\delta > 0$, we have
\begin{equation*}
\begin{split}
-3+\sqrt{1+2/\delta } \le   {\sqrt{2/\delta}}
& \iff (\sqrt{1+2/\delta })^2 \le  ( {\sqrt{2/\delta}} + 3)^2\\
& \iff  (\delta+2)/\delta \le (2+6\sqrt{2\delta}+9\delta)/\delta \\
& \iff 8\delta + 6\sqrt{2\delta} \ge 0,
\end{split}
\end{equation*}
which proves the claim.
\end{proof}

\begin{fact}\label{fact:fact2}
$  \arcsec(1+\delta)  \le {2}\sqrt{\delta} $ for $\delta > 0$.
\end{fact}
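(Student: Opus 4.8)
The plan is to eliminate both the inverse secant and the square root at once by reparametrizing through $\theta := \arcsec(1+\delta)$. For $\delta>0$ this $\theta$ runs over $(0,\pi/2)$ and satisfies $\sec\theta = 1+\delta$, hence $\delta = \sec\theta - 1 = (1-\cos\theta)/\cos\theta$. Since both sides of the claimed inequality are nonnegative, $\arcsec(1+\delta)\le 2\sqrt\delta$ is equivalent to $\theta^2 \le 4\delta$. Substituting the expression for $\delta$ and multiplying through by the positive quantity $\cos\theta$, the inequality becomes the purely trigonometric statement
\[
(4+\theta^2)\cos\theta \le 4, \qquad \theta\in(0,\pi/2),
\]
which I would prove directly. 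The virtue of this reformulation is that it dispenses with $\arcsec$ and $\sqrt{\cdot}$ entirely, so that, unlike in the fallback below, no case split on the magnitude of $\delta$ is needed.

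To establish the displayed inequality, I would set $g(\theta) := 4 - (4+\theta^2)\cos\theta$ and show $g\ge0$ on $[0,\pi/2)$ by a monotonicity argument. One checks $g(0)=0$ and differentiates to get $g'(\theta) = (4+\theta^2)\sin\theta - 2\theta\cos\theta$. Using the elementary bound $\sin\theta \ge \theta\cos\theta$ (equivalently $\tan\theta\ge\theta$ on $[0,\pi/2)$) together with $(4+\theta^2)\theta \ge 2\theta$ and $\cos\theta\ge0$, one obtains $(4+\theta^2)\sin\theta \ge (4+\theta^2)\theta\cos\theta \ge 2\theta\cos\theta$, so $g'\ge0$. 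Hence $g$ is nondecreasing and $g(\theta)\ge g(0)=0$, which is exactly what is needed.

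The main obstacle is conceptual rather than computational: unlike Fact~\ref{fact:fact1}, here the $\arcsec$ and the square root block a direct algebraic iff-chain, and the whole difficulty is finding the substitution $\delta=\sec\theta-1$ that collapses the claim into a single clean trigonometric inequality settled by one application of $\tan\theta\ge\theta$. As a fallback that avoids the substitution, I would instead split on whether $2\sqrt\delta\ge\pi/2$ (where $\theta<\pi/2\le2\sqrt\delta$ makes the bound immediate) and, in the complementary range, use monotonicity of cosine to reduce the claim to $\tfrac{1}{1+\delta}\ge\cos(2\sqrt\delta)$; bounding $\cos(2\sqrt\delta)\le 1-2\delta+\tfrac23\delta^2$ via the standard estimate $\cos x\le 1-\tfrac{x^2}{2}+\tfrac{x^4}{24}$ then leaves the polynomial inequality $\tfrac{1}{1+\delta}\ge 1-2\delta+\tfrac23\delta^2$, i.e. $\delta\bigl(1+\tfrac43\delta-\tfrac23\delta^2\bigr)\ge0$, which holds on the relevant range. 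I expect the substitution route to be shorter and cleaner, so that is the one I would present.
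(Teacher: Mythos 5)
Your proof is correct, and it takes a route that is parallel to but genuinely distinct from the paper's. Both arguments share the same skeleton --- exhibit a function vanishing at the origin and show its derivative is nonnegative --- but they run it in different variables. The paper works directly in $\delta$: it sets $\varphi(\delta) := 2\sqrt{\delta} - \arcsec(1+\delta)$, notes $\varphi(0)=0$, and computes $\varphi'(\delta) = \frac{1}{\sqrt{\delta}}\left(1 - \frac{1}{(1+\delta)\sqrt{2+\delta}}\right) > 0$, which requires the derivative formula for $\arcsec$ and the simplification $\sqrt{(1+\delta)^2-1} = \sqrt{\delta}\sqrt{2+\delta}$. You instead substitute $\theta = \arcsec(1+\delta)$ first, which bijects $\delta \in (0,\infty)$ onto $\theta \in (0,\pi/2)$ and (after squaring, legitimate since both sides are nonnegative, and clearing the positive factor $\cos\theta$) converts the claim into $(4+\theta^2)\cos\theta \le 4$; your monotonicity check for $g(\theta) = 4-(4+\theta^2)\cos\theta$ then reduces, via $g'(\theta) = (4+\theta^2)\sin\theta - 2\theta\cos\theta$, to the classical bound $\tan\theta \ge \theta$ together with $4+\theta^2 \ge 2$ and $\theta\cos\theta \ge 0$. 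All of these steps check out. What your route buys is the elimination of the $\arcsec$ derivative and of all square roots, replacing them with one change of variables and a textbook trigonometric inequality; the paper's route is shorter to state but leans on the less familiar differentiation of $\arcsec$ (and its remark about $\lim_{\delta\to 0^+}\varphi'(\delta)=+\infty$ is not actually needed, since positivity of $\varphi'$ on $(0,\infty)$ plus continuity at $0$ already suffices). Your fallback via the case split at $2\sqrt{\delta} \ge \pi/2$ and the bound $\cos x \le 1 - \frac{x^2}{2} + \frac{x^4}{24}$ is also sound on the stated range, but the substitution argument you present as primary is indeed the cleaner of the two.
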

\begin{proof}
Let us define a function $\varphi (\delta) :={2}\sqrt{\delta} -\arcsec(1+\delta)  $ for $\delta \ge 0$.  We will prove the claim by showing that $\varphi(\delta) \ge 0$ for $\delta \ge 0$. 

Firstly, observe that $\varphi$ is a continuous function on $\mathbb{R}_+$ and $\varphi(0)=0$. Secondly,  consider the derivative of $\varphi$,
\[
\varphi'(\delta) = \frac{1}{\sqrt{\delta}} - \frac{1}{(1+\delta) \sqrt{(1+\delta)^2-1}} =  \frac{1}{\sqrt{\delta}} \left (1- \frac{1}{(1+\delta)\sqrt{2+\delta}} \right ),
\]
which is positive for $\delta > 0$, and has the limit
\[
\lim_{\delta \to 0^+} \varphi'(\delta) = +\infty.
\]
Therefore, we conclude that $\varphi$ is a nondecreasing function.
Combining the above facts, we obtain that $\varphi(\delta) \ge 0$ for $\delta \ge 0$. 
\end{proof}
\end{proof}

We remark that the difference between  $\hat\nu_\delta$  and $ \nu_\delta$ in Proposition \ref{thm:heurConstruction}  cannot be improved uniformly as Table \ref{tab:opt vs heur nuDelta} demonstrates that there exist $\delta$ values for which this difference is equal to two.
\begin{table}[H]\small
\caption{Comparison of $\nu_\delta$  and $\hat \nu_\delta$  for different $\delta$ values.}\label{tab:opt vs heur nuDelta}
\centering
\begin{tabular}{c|cc}
\toprule
   $\delta$ &   $\nu_\delta$  & $\hat \nu_\delta$ \\
\midrule



  $ 10^{-4} $&          7 &          9 \\

   $10^{-5} $&          9 &         10 \\

 $ 10^{-6}$ &         11 &         12 \\

   $ 10^{-7}$&         12 &         14 \\


\bottomrule
\end{tabular}  
\end{table}

Finally, we provide the closed-form construction of the  the polyhedral cone $\tilde P^{\nu} (\hat\theta)$  based on \eqref{eq:heurConstruction} and \eqref{eq:heurConstructionAngle} up to $\nu = 14$ in Table  \ref{tab:heurConstruct}. The values of $\hat \nu_\delta$ for $\delta=10^{-1}, 10^{-2}, \dots, 10^{-7}$ are highlighted.

\begin{table}[H]\small
\caption{Closed-form constructions of the polyhedral cone $\tilde P^{\nu} (\hat\theta)$.}\label{tab:heurConstruct}
\centering
\begin{tabular}{c|cc}
\toprule
  $j $ &  $\sec(\hat \theta_j)$ &   $\hat a_j, \hat b_j, \hat c_j$       \\
\midrule
         1 & 1.42016807 & 120,119,169 \\

       {\bf 2} & 1.08333333 &  5,12,13 \\

         3 & 1.04166667 &  7,24,25 \\

         4 & 1.01666667 & 11,60,61 \\

       {\bf  5} & 1.00555556 & 19,180,181 \\

         6 & 1.00163399 & 35,612,613 \\

        {\bf 7} & 1.00044563 & 67,2244,2245 \\

         8 & 1.00011655 & 131,8580,8581 \\

       {\bf  9} & 1.00002982 & 259,33540,33541 \\

      {\bf  10} & 1.00000754 & 515,132612,132613 \\

        11 & 1.00000190 & 1027,527364,527365 \\

       {\bf 12} & 1.00000048 & 2051,2103300,2103301 \\

        13 & 1.00000012 & 4099,8400900,8400901 \\

      {\bf  14} & 1.00000003 & 8195,33579012,33579013 \\
\bottomrule
\end{tabular}  
\end{table}

\subsection{Bounding the Largest Coefficient in $\tilde P^{\hat \nu_\delta} ( \hat \theta) $}
\label{sec:boundCoeff}

We now provide an upper bound for the largest coefficient used in the description of the polyhedral cone $\tilde P^{\hat \nu_\delta} ( \hat \theta) $, which is sublinear in $\delta$.

\begin{proposition}\label{thm:heurBoundCoeff}
Let $\delta \in (0,\frac14)$ and the largest coefficient used in the description of the polyhedral cone $\tilde P^{\hat \nu_\delta} ( \hat \theta) $ be defined as
\[
C_\delta :=  2 \hat h^2 - 2 \hat h + 1 , \text { where } \hat  h := 2^{\hat \nu_\delta - 2} + 2.
\]
Then, we have
\[
C_\delta \le  \frac4\delta + 7 - 6 \sqrt{1+\frac2\delta} \le \frac4\delta.
\]
\end{proposition}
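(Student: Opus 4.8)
The plan is to turn the explicit formula \eqref{eq:hatNuDelta} for $\hat\nu_\delta$ into an upper bound on $\hat h$, and then to propagate that bound through the quadratic $f(h):=2h^2-2h+1$ that defines the largest coefficient $C_\delta=f(\hat h)$. First I would record the elementary ceiling inequality $\lceil x\rceil < x+1$ (valid for every real $x$), applied to $x=\log_2\!\big(-6+2\sqrt{1+2/\delta}\big)$, which gives $2^{\hat\nu_\delta}<2\big(-6+2\sqrt{1+2/\delta}\big)$. Dividing by $4$ and adding $2$ yields
\[
\hat h=2^{\hat\nu_\delta-2}+2< -1+\sqrt{1+\tfrac2\delta}.
\]

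Next, since $f(h)=2h^2-2h+1$ is strictly increasing on $[\tfrac12,\infty)$ and $\hat h>2>\tfrac12$, the bound on $\hat h$ transfers directly to $C_\delta$: writing $s:=\sqrt{1+2/\delta}$, I would conclude $C_\delta=f(\hat h)\le f(s-1)$. The middle expression of the proposition is then obtained by a one-line expansion using $s^2=1+2/\delta$:
\[
f(s-1)=2(s-1)^2-2(s-1)+1=2s^2-6s+5=\frac{4}{\delta}+7-6\sqrt{1+\tfrac2\delta},
\]
which establishes the first inequality.

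For the second inequality it suffices to show $7-6\sqrt{1+2/\delta}\le 0$. Since $\delta\in(0,\tfrac14)$ we have $2/\delta>8$, hence $\sqrt{1+2/\delta}>3$ and $6\sqrt{1+2/\delta}>18\ge 7$, giving $\frac{4}{\delta}+7-6\sqrt{1+2/\delta}\le\frac4\delta$. (In fact this last step only requires $\delta\le 72/13$, so it sits comfortably inside the hypothesis.) I do not expect any serious obstacle: the argument is essentially a single monotonicity step followed by an algebraic identity. The only points demanding care are that the \emph{strict} form $\lceil x\rceil<x+1$ is what ultimately justifies the weak inequality $\le$ in the claim, and that both $\hat h$ and $s-1$ lie in the region $(\tfrac12,\infty)$ where $f$ is increasing, so that $\hat h< s-1$ may legitimately be pushed through $f$.
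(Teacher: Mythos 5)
Your proposal is correct and follows essentially the same route as the paper: bound $\hat h$ by $-1+\sqrt{1+2/\delta}$ via the ceiling inequality $\lceil x\rceil \le x+1$, push that bound through the quadratic $2h^2-2h+1$ (the paper phrases your monotonicity step as completing the square $2(\hat h-\tfrac12)^2+\tfrac12$, which is the same observation), and finish by noting $7-6\sqrt{1+2/\delta}\le 0$ for $\delta<\tfrac14$. Your added remarks on the strictness of the ceiling bound and on both arguments lying in $(\tfrac12,\infty)$ are correct refinements that the paper leaves implicit.
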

\begin{proof}
Firstly, by plugging in the exact value of  $\hat \nu_\delta $ from \eqref{eq:hatNuDelta}, we obtain  
\[
\hat h = 2^{\hat \nu_\delta - 2} + 2 = 2^{  \left\lceil \log_2 \left (-6+2\sqrt{1+\frac2\delta} \right) \right \rceil -2 }  + 2 \le -1 + \sqrt{1+\frac{2}\delta}.
\]
Then, we have
\[
C_\delta =  2 \hat h^2 - 2 \hat h + 1 =  2 \left(\hat h -\frac12 \right)^2 +\frac12 \le 2 \left( -\frac32 + \sqrt{1+\frac{2}\delta}\right)^2 + \frac12 = \frac4\delta + 7 - 6 \sqrt{1+\frac2\delta}.
\]
Finally, since $\delta < \frac14 $, we deduce that $7 - 6 \sqrt{1+\frac2\delta} \le 0$.  Hence, we prove that $C_\delta \le 4/\delta$, which concludes the proof.
\end{proof}
Notice that  the assertion of Proposition \ref{thm:heurBoundCoeff} and the computed values  in Table  \ref{tab:heurConstruct}  are in full accordance.

\subsection{Closed-Form Rational Outer-Approximation of  $L^N$ and Bounding Its Largest Coefficient}
\label{sec:extendLn'}

%

Now, we are ready to give a closed-form rational outer-approximation of $L^N$. Given $\epsilon >0$, let~$\delta_\epsilon$  denote the largest value of $\delta$  satisfying~\eqref{eq:deltaEpsilon} and $\hat \nu_{\delta_\epsilon}$ be selected according to~\eqref{eq:hatNuDelta}. Consider $\hat \theta_j$, $j=1,\dots,\hat \nu_{\delta_\epsilon}$ as defined in \eqref{eq:heurConstructionAngle}.
Let us define the polyhedral cone
\[
\bar Q^N (\hat \theta) := \{x\in\mathbb{R}^{n} : \exists y_{k,i} : \eqref{eq:towerLinear} , (y_{k,2i-1}, y_{k,2i}, y_{k+1,i}) \in \tilde P^{\hat \nu_{\delta_\epsilon}} ( \hat \theta)   \ k=0,\dots,K-1, i=1,\dots, 2^{K-k-1} \},
\]
with $K = \lceil \log_2 (N-1) \rceil$. Then, we clearly have
\[
L^N \subseteq \bar Q^N(\hat \theta)  \subseteq (1+\epsilon) L^N.
\]
Notice that $\bar Q^N(\hat \theta) $ would require $(2\hat\nu_{\delta_\epsilon}+3)(N-2)$ many additional variables and  $(3\hat \nu_{\delta_\epsilon}+6)(N-2)$ many constraints in its description. 
Let us now compare the size of $ Q^N( \theta) $ as defined in Section \ref{sec:extendLn}  and $\bar Q^N(\hat \theta) $. Due to the last assertion of Proposition \ref{thm:heurConstruction},  that is $\hat \nu_\delta  \le  \nu_\delta + 2$, we conclude that the latter require at most $4(N-2)$ more variables and $6(N-2)$ more constraints. Hence, the increase in the size of the closed-form outer-approximation is linear in the dimension of the cone.

We conclude our paper by providing an upper bound on the largest coefficient in the description of $\bar Q^N (\hat \theta) $, which is sublinear in $\epsilon$ and logarithmic in $N$.
\begin{proposition}\label{thm:heurBoundCoeffLn}
Let $\epsilon\in(0,1)$. Consider $L^N$ and its outer-approximation $\bar Q^N (\hat \theta) $ as defined above.
Then,  the largest coefficient used in the description of the polyhedral cone $ \bar Q^N ( \hat \theta )$ is upper bounded by
\[
 \frac{4 \lceil \log_2 (N-1) \rceil }{ (\ln 2) \epsilon}.
\]
\end{proposition}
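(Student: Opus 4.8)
The plan is to reduce the $L^N$ bound to the single-cone bound already established in Proposition~\ref{thm:heurBoundCoeff}, and then to lower-bound the per-level accuracy $\delta_\epsilon$. First I would observe that the tower-of-variables construction defining $\bar Q^N(\hat\theta)$ glues together $K = \lceil \log_2(N-1)\rceil$ levels, each of whose $L^3$-blocks $(y_{k,2i-1},y_{k,2i},y_{k+1,i})\in \tilde P^{\hat\nu_{\delta_\epsilon}}(\hat\theta)$ is an identical copy of the same integral polyhedral cone, together with the linear identities \eqref{eq:towerLinear} whose coefficients are $1$. Consequently the largest coefficient appearing anywhere in the description of $\bar Q^N(\hat\theta)$ is exactly the largest coefficient in $\tilde P^{\hat\nu_{\delta_\epsilon}}(\hat\theta)$, namely $C_{\delta_\epsilon}$. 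By Proposition~\ref{thm:heurBoundCoeff} this satisfies $C_{\delta_\epsilon} \le 4/\delta_\epsilon$, so it remains only to show $4/\delta_\epsilon \le \tfrac{4K}{(\ln 2)\epsilon}$, i.e.\ that $\delta_\epsilon \ge (\ln 2)\epsilon / K$.

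Next I would recall that, by \eqref{eq:deltaEpsilon}, $\delta_\epsilon$ is the largest $\delta$ with $(1+\delta)^K \le 1+\epsilon$, so $\delta_\epsilon = (1+\epsilon)^{1/K}-1$. Writing $(1+\epsilon)^{1/K} = \exp\!\big(\tfrac1K \ln(1+\epsilon)\big)$ and applying the elementary inequality $e^x \ge 1+x$ with $x = \tfrac1K\ln(1+\epsilon) \ge 0$, I obtain the clean lower bound
\[
\delta_\epsilon \;=\; (1+\epsilon)^{1/K}-1 \;\ge\; \frac{\ln(1+\epsilon)}{K}.
\]

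To finish, I would prove the scalar inequality $\ln(1+\epsilon) \ge (\ln 2)\,\epsilon$ for $\epsilon \in (0,1)$. The cleanest argument is by concavity: the map $\epsilon \mapsto \ln(1+\epsilon)$ is concave on $[0,1]$, the right-hand side $(\ln 2)\epsilon$ is the linear chord through the two endpoints (the two sides agree at $\epsilon=0$, where both vanish, and at $\epsilon=1$, where both equal $\ln 2$), and a concave function lies above its chord on the interval between the interpolation points. Chaining this with the previous display yields $\delta_\epsilon \ge (\ln 2)\epsilon/K$, whence $C_{\delta_\epsilon} \le 4/\delta_\epsilon \le 4K/((\ln 2)\epsilon) = 4\lceil \log_2(N-1)\rceil / ((\ln 2)\epsilon)$, as claimed.

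I do not anticipate a genuine obstacle here: once the problem is reduced to bounding $1/\delta_\epsilon$, every step is elementary. The only point requiring a small amount of care is the last inequality $\ln(1+\epsilon)\ge(\ln 2)\epsilon$, which relies on $\epsilon$ being confined to $(0,1)$ (it fails for $\epsilon>1$); I would make sure to invoke the hypothesis $\epsilon\in(0,1)$ explicitly at that point, and to note that the concavity/endpoint-matching argument is exactly what pins the constant $\ln 2$ rather than a weaker constant.
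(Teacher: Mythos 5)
Your proposal is correct and follows essentially the same route as the paper: reduce to the single-block bound $C_{\delta_\epsilon}\le 4/\delta_\epsilon$ from Proposition~\ref{thm:heurBoundCoeff} and then show $\delta_\epsilon \ge (\ln 2)\epsilon/K$ with $K=\lceil \log_2(N-1)\rceil$. The only (immaterial) difference is the order of the two elementary convexity estimates: you pass through the intermediate bound $\delta_\epsilon \ge \ln(1+\epsilon)/K$ (tangent bound $e^x\ge 1+x$, then the chord $\ln(1+\epsilon)\ge(\ln 2)\epsilon$ on $(0,1)$), whereas the paper passes through $\delta_\epsilon \ge (2^{1/K}-1)\epsilon$ (chord of $(1+\epsilon)^{1/K}-1$ in $\epsilon$, then the tangent bound $2^{1/K}-1\ge(\ln 2)/K$), arriving at the same constant.
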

\begin{proof}
Let $\delta_\epsilon$  denote the largest value of $\delta$  satisfying the relation in \eqref{eq:deltaEpsilon}. Then, we have
\[
\delta_\epsilon = (1+\epsilon)^{1/ \lceil \log_2 (N-1) \rceil } - 1 \ge ( 2^{ \lceil \log_2 (N-1) \rceil} - 1) \epsilon \ge \frac{\ln 2}{ \lceil \log_2 (N-1) \rceil} \epsilon.
\]
Here, the first inequality follows since the function $(1+\epsilon)^{1/K } - 1$ is concave in $\epsilon$,  and hence, it can be lower bounded by a linear function passing through the points at $\epsilon=0$ and $\epsilon=1$. The second inequality follows due to the following fact:

\begin{fact}
$  2^{1/K} - 1 \ge (\ln2) / K$ for $K >0$.
\end{fact}
\begin{proof}
We first note that the function $2^\tau - 1$ is convex on  $ \mathbb{R}_+$, therefore, it can be lower bounded by the gradient inequality at the point $\tau=0$. Hence, we have $2^\tau - 1 \ge  (\ln2) \tau$. Then, the statement follows through the variable transformation $\tau := 1/ K$ for $K > 0$.
\end{proof}

To conclude the proof, we use Proposition \ref{thm:heurBoundCoeff} with $\delta_\epsilon$ and obtain that the largest coefficient in the description of $\bar Q^N (\hat \theta) $ is upper bounded by
\[
 \frac4{\delta_\epsilon} \le  \frac{4 \lceil \log_2 (N-1) \rceil }{ (\ln 2) \epsilon},
\]
which is the upper bound on the largest coefficient in the description of $\tilde P^{\hat \nu_{\delta_\epsilon}} ( \hat \theta)$.
\end{proof}

\section{Third Outer-Approximation: Achieving the Best Accuracy Given an Upper Bound on Coefficients}
\label{sec:deltaGivenM}

In the previous sections, we analyze the rational polyhedral outer-approximations of $L^N$ when the accuracy $\epsilon$ is specified. Although these outer-approximations have provably small sizes in terms of the number of variables and linear inequalities, the integers used in their inequality description grow large. In this section, we will analyze the \textit{reverse} problem: Given an upper bound on the largest coefficient, obtain the smallest size formulation among the approximations which provide the best accuracy, that is, the one with the smallest $\epsilon$. We solve this problem in three steps: After establishing some preliminary results in Section~\ref{sec:deltaGivenM_prelim}, we obtain the best possible approximation accuracy as a function of the upper bound on the coefficients and provide an approximation which attains this accuracy in Section~\ref{sec:deltaGivenM_closed}. Finally, we propose another scheme which can potentially reduce the size of the inequality description of the outer-approximating polyhedron while retaining the same level of accuracy in Section~\ref{sec:deltaGivenM_optimized}.
 Throughout this section, we will primarily focus on the analysis of $L^3$ and mention the implications for general $L^N$ at the end.

\subsection{Preliminary Results}
\label{sec:deltaGivenM_prelim}
Recall that our aim in this section is to compute the best approximation accuracy of the rational polyhedral outer-approximation of $L^3$ given an upper bound on the coefficients of the inequality description as $C \in \mathbb{Z}_+$.  Throughout this section, we will assume that $C \ge 169$ since smaller values of $C$ does not produce a reasonable approximation guarantee.
We start our analysis with the following three lemmas:
\begin{lemma}\label{lem:optSolGivenC1}
Consider the following optimization problem: 
\begin{equation}\label{eq:optSolGivenC1}
\psi_1(C) := \min_{ m\in\mathbb{Z},  n\in\mathbb{Z} } \bigg\{ \frac{m^2+n^2}{2mn} : \ m^2+n^2 \le C, \ m-1 \ge n \ge 1 \bigg\}.
\end{equation}
Then, the optimal solution $(m_1^*, n_1^*)$  satisfies the condition $ n_1^* =  m_1^* - 1$ with $m_1^* = \lfloor \frac12 ( 1 +  \sqrt{ 2C - 1} \rfloor$.
\end{lemma}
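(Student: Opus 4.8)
The plan is to analyze the optimization problem~\eqref{eq:optSolGivenC1} by understanding the structure of the objective function $f(m,n) := \frac{m^2+n^2}{2mn}$ and the feasible region. Since the objective equals $\sec(\theta)$ where $2mn$ is the even leg, $m^2-n^2$ the odd leg, and $m^2+n^2$ the hypotenuse of a Pythagorean triple, minimizing it amounts to making the corresponding angle $\theta = \arcsec\!\big(\frac{m^2+n^2}{2mn}\big)$ as small as possible. Geometrically, a small $\theta$ requires $m/n$ to be large; hence I expect the optimum to push $n$ toward the boundary $n = m-1$ and $m$ toward the largest value permitted by the hypotenuse constraint $m^2+n^2 \le C$.

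First I would fix $n$ and examine $f$ as a function of $m$ alone. Writing $f(m,n) = \frac12\big(\frac{m}{n} + \frac{n}{m}\big)$, one sees that for fixed $n$ the objective is strictly decreasing in $m$ on the region $m > n$ (since $\frac{\partial f}{\partial m} = \frac12(\frac{1}{n} - \frac{n}{m^2}) > 0$ means $f$ increases in $m$, so I must be careful about the direction). More precisely, I would verify the monotonicity directly: for fixed ratio-relevant reasoning it is cleaner to argue that the objective depends only on the ratio $t = m/n \ge 1 + 1/n$, and that $f = \frac12(t + 1/t)$ is strictly increasing in $t$ for $t \ge 1$. Thus to minimize $f$ we want $t$ as small as possible, i.e.\ $m/n$ as close to $1$ as the integrality and the constraint $m - 1 \ge n$ allow — which forces $n = m-1$, the largest feasible $n$ for a given $m$. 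Substituting $n = m-1$ reduces the problem to a one-variable problem in $m$.

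With $n = m-1$ substituted, the hypotenuse constraint becomes $m^2 + (m-1)^2 \le C$, i.e.\ $2m^2 - 2m + 1 \le C$, and along this boundary line the objective $f(m, m-1) = \frac{2m^2-2m+1}{2m(m-1)}$ is strictly decreasing in $m$ (since increasing $m$ with $n=m-1$ increases the even leg relative to the gap, pushing the ratio $t = m/(m-1)$ toward $1$). Therefore the optimum takes the largest integer $m$ satisfying $2m^2 - 2m + 1 \le C$. Solving the quadratic $2m^2 - 2m + (1 - C) \le 0$ gives $m \le \frac12\big(1 + \sqrt{2C-1}\big)$, whence $m_1^* = \big\lfloor \frac12(1 + \sqrt{2C-1}) \big\rfloor$ and $n_1^* = m_1^* - 1$, as claimed.

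The main obstacle I anticipate is making the two monotonicity claims fully rigorous in the \emph{integer} setting rather than the continuous relaxation: one must confirm that (a) pushing $n$ up to $m-1$ is always feasible and never skips a better integer point, and (b) that along $n = m-1$ taking the maximal feasible integer $m$ genuinely minimizes $f$ and does not conflict with the side constraint $m - 1 \ge n \ge 1$. I would handle (a) by the exchange-type argument that, for any feasible $(m,n)$ with $n < m-1$, replacing it by $(m, m-1)$ keeps $m^2+n^2 \le C$ feasible (since we only increase $n$ toward $m$, but must recheck the hypotenuse bound — here a small lemma or direct comparison showing the decrease in $f$ dominates is needed) and strictly decreases $f$; the cleanest route is instead to reparametrize by $t = m/n$ and argue the continuous optimum lies on $n = m-1$, then round. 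Point (b) follows from the strict monotonicity of $\frac{2m^2-2m+1}{2m(m-1)}$ in $m$, which I would confirm by a one-line derivative or difference computation.
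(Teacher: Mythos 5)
Your proposal follows essentially the same route as the paper's proof: rewrite the objective as $\frac12\big(\frac{m}{n}+\frac{n}{m}\big)$, observe that $\zeta\mapsto\zeta+1/\zeta$ is increasing for $\zeta\ge1$ so the ratio $m/n$ should be made as small as possible, conclude $n_1^*=m_1^*-1$, and take the largest $m$ with $m^2+(m-1)^2\le C$. The one loose end you correctly flag---that the exchange $(m,n)\to(m,m-1)$ may violate $m^2+n^2\le C$, since it increases $n$---is real (and, for what it is worth, the paper's own terse proof glosses over the same point), but your proposed patches go in the wrong direction: feasibility is a hard constraint, so no ``decrease in $f$ dominates'' lemma can rescue an infeasible exchange, and rounding the continuous optimum is precisely the kind of step that fails in integer settings. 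The clean repair is already contained in your own derivative computation: since $f(m,n)$ is increasing in $m$ for fixed $n$ on the region $m>n$, perform the \emph{opposite} exchange $(m,n)\to(n+1,n)$. This weakly decreases the objective, because $(n+1)/n\le m/n$ and the objective is increasing in the ratio, and it is automatically feasible, because $(n+1)^2+n^2\le m^2+n^2\le C$ and $n\ge 1$ is untouched. Hence one may assume $m=n+1$, along which
\[
f(n+1,n)=\frac{2n^2+2n+1}{2n(n+1)}=1+\frac{1}{2n(n+1)}
\]
is strictly decreasing in $n$, so the optimum takes the largest feasible $n$, i.e.\ the largest $m=n+1$ with $2m^2-2m+1\le C$, which yields $m_1^*=\big\lfloor\frac12\big(1+\sqrt{2C-1}\big)\big\rfloor$ and $n_1^*=m_1^*-1$ as claimed. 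With this one-line exchange substituted for your step (a), your argument is complete and coincides with (indeed slightly strengthens) the paper's.
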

\begin{proof}
Observe that the objective function of problem~\eqref{eq:optSolGivenC1} can be written as $\frac12 (\frac mn + \frac nm)$. Noting that the function $\omega_1(\zeta) = \zeta + 1/\zeta $ is increasing for $\zeta \ge 1$, we deduce that the ratio $m / n $ should be chosen as small as possible in an optimal solution. Considering the constraint $m \ge n+1$, we conclude that the condition $ n_1^* =  m_1^* - 1$ must be satisfied. Finally, the exact value of $m_1^* $ can be computed as the largest value of $m$ satisfying the constraint $m^2 + (m-1)^2 \le C$.
\end{proof}

\begin{lemma}\label{lem:optSolGivenC2}
Consider the following optimization problem: 
\begin{equation}\label{eq:optSolGivenC2}
\psi_2(C) := \min_{ m\in\mathbb{Z},  n\in\mathbb{Z} } \bigg\{ \frac{m^2+n^2}{m^2 - n^2} : \ m^2+n^2 \le C, \ m-1 \ge n \ge 1 \bigg\}.
\end{equation}
Then, the optimal solution is  $(m_2^*, n_2^*) = ( \lfloor   \sqrt{ C - 1} \rfloor , 1 )$.
\end{lemma}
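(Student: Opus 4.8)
The plan is to mirror the argument used for Lemma~\ref{lem:optSolGivenC1} and reduce the problem to the monotonicity of the objective in a single ratio variable. First I would observe that the objective $\frac{m^2+n^2}{m^2-n^2}$ depends on $m$ and $n$ only through the ratio $t := n/m \in (0,1)$, since it equals $g(t) := \frac{1+t^2}{1-t^2}$. A one-line derivative computation gives $g'(t) = \frac{4t}{(1-t^2)^2} > 0$ on $(0,1)$, so $g$ is strictly increasing there. Consequently, minimizing the objective over the feasible integer points is \emph{equivalent} to minimizing the ratio $n/m$ over the same points; this replaces the fractional objective by a cleaner monotone surrogate, exactly as the surrogate $\omega_1$ was used in the previous lemma.

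Next I would reduce to a one-dimensional comparison in $n$. For each admissible value of $n$, the smallest possible value of $n/m$ is obtained by taking $m$ as large as the constraint $m^2 + n^2 \le C$ allows, namely $m = M(n) := \lfloor \sqrt{C - n^2}\rfloor$. It then suffices to show that $\phi(n) := n/M(n)$ is increasing in the integer variable $n \ge 1$, so that the minimum is attained at $n = 1$. This follows from two elementary monotonicities: the numerator $n$ is strictly increasing, while $M(n)$ is non-increasing in $n$ (as $C - n^2$ decreases) and remains positive on the feasible range. Combining them yields $\phi(n) = n/M(n) < (n+1)/M(n) \le (n+1)/M(n+1) = \phi(n+1)$, which gives strict monotonicity.

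With $n_2^* = 1$ thus forced, the optimal $m$ is $M(1) = \lfloor \sqrt{C-1}\rfloor$, matching the claimed solution. Finally I would verify feasibility of this point: the constraint $m - 1 \ge n$ becomes $\lfloor\sqrt{C-1}\rfloor \ge 2$, which holds because $C \ge 169$ gives $\sqrt{C-1} \ge \sqrt{168} > 12$. The main obstacle, such as it is, is the integrality bookkeeping in the second step: one must argue about $M(n) = \lfloor\sqrt{C-n^2}\rfloor$ rather than the smooth bound $\sqrt{C-n^2}$, and confirm that flooring does not break the monotonicity of $\phi$. Because the floor is itself monotone and the numerator strictly increasing, the argument stays clean, but this is the only place where the discreteness of the problem genuinely enters.
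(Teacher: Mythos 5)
Your proof is correct and takes essentially the same route as the paper's: both reduce the objective to a monotone function of the ratio of $m$ and $n$ (the paper uses $\omega_2(\zeta) = 1 + 2/(\zeta^2-1)$, decreasing in $\zeta = m/n$, while you use $g(t) = (1+t^2)/(1-t^2)$, increasing in $t = n/m$), thereby forcing $n_2^* = 1$ and then the largest feasible $m$, namely $\lfloor \sqrt{C-1} \rfloor$. Your extra step showing that $\phi(n) = n/\lfloor\sqrt{C-n^2}\rfloor$ is strictly increasing is a welcome tightening of the integer bookkeeping that the paper's proof passes over with the informal statement that ``the ratio $m/n$ should be chosen as large as possible,'' but it is a refinement of the same argument, not a different one.
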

\begin{proof}
Observe that the objective function of problem~\eqref{eq:optSolGivenC2} can be written as $1 + 2 / ( m^2/n^2  - 1 ) $. Noting that the function $\omega_2(\zeta) = 1 + 2/ (\zeta^2-1) $ is decreasing for $\zeta > 1$, we deduce that the ratio $m / n $ should be chosen as large as possible in an optimal solution. Considering the constraint $n\ge1$, we conclude that the condition $ n_2^* = 1$ must be satisfied. Finally, the exact value of $m_2^* $ can be computed as the largest value of $m$ satisfying the constraint $m^2 + 1^2 \le C$.
\end{proof}


\begin{lemma}\label{lem:optSolGivenCtriple}
Consider the following optimization problem: 
\begin{equation}\label{eq:optSolGivenC}
\psi(C) := \min_{ a\in\mathbb{Z},  b\in\mathbb{Z},  c\in\mathbb{Z}  } \bigg\{ \frac{c}{b} : \ a^2+b^2 = c^2, c \le C, \ a,b,c \ge 1 \bigg\}.
\end{equation}
Then, we have $\psi(C) = \psi_1(C)$, 
where $\psi_1(\cdot)$ is defined as in~\eqref{eq:optSolGivenC1}.
\end{lemma}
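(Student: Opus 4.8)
The plan is to show that the feasible triples of $\psi(C)$ are, up to rescaling and up to relabeling which leg is $b$, exactly captured by the feasible points of $\psi_1(C)$ and $\psi_2(C)$, so that $\psi(C)=\min\{\psi_1(C),\psi_2(C)\}$, and then to use the explicit optima from Lemmas~\ref{lem:optSolGivenC1} and \ref{lem:optSolGivenC2} to verify $\psi_1(C)\le\psi_2(C)$ for every $C\ge 169$.

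First I would prove $\psi(C)=\min\{\psi_1(C),\psi_2(C)\}$. For ``$\le$'', observe that any $(m,n)$ feasible for $\psi_1(C)$ yields the Pythagorean triple $(m^2-n^2,\,2mn,\,m^2+n^2)$, which is feasible for \eqref{eq:optSolGivenC} with $b=2mn$, $c=m^2+n^2\le C$, and objective $(m^2+n^2)/(2mn)$; choosing instead $b=m^2-n^2$ recovers the $\psi_2$ objective. Hence $\psi(C)\le\psi_1(C)$ and $\psi(C)\le\psi_2(C)$. For ``$\ge$'', let $(a,b,c)$ be optimal for $\psi(C)$. Since $c/b$ is invariant under scaling the triple by $k\ge 1$ while the constraint $c\le C$ is slackest for the primitive representative, we may assume $(a,b,c)$ is primitive; then there exist coprime $p>q\ge 1$ of opposite parity with $c=p^2+q^2$ and $\{a,b\}=\{p^2-q^2,\,2pq\}$, and in particular $p-1\ge q\ge 1$. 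If $b=2pq$ then $(p,q)$ is feasible for $\psi_1(C)$ and $c/b=(p^2+q^2)/(2pq)\ge\psi_1(C)$; if $b=p^2-q^2$ then $(p,q)$ is feasible for $\psi_2(C)$ and $c/b=(p^2+q^2)/(p^2-q^2)\ge\psi_2(C)$. Thus $\psi(C)\ge\min\{\psi_1(C),\psi_2(C)\}$, and equality follows.

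It then remains to show $\psi_1(C)\le\psi_2(C)$. By Lemma~\ref{lem:optSolGivenC1} the optimal $\psi_1$-triple is of type (ii) in \eqref{eq:twoTypes}, giving $\psi_1(C)=1+\frac{1}{2m_1^*(m_1^*-1)}$, while by Lemma~\ref{lem:optSolGivenC2} the optimal $\psi_2$-triple is of type (i), giving $\psi_2(C)=1+\frac{2}{(m_2^*)^2-1}$. A short manipulation reduces $\psi_1(C)\le\psi_2(C)$ to the integer inequality $m_2^*\le 2m_1^*-1$, since $(m_2^*)^2-1\le 4m_1^*(m_1^*-1)=(2m_1^*-1)^2-1$. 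I would establish this by comparing both sides with $\lfloor\sqrt{2C-1}\rfloor$: the floor bound $m_1^*>\tfrac12(\sqrt{2C-1}-1)$ gives $2m_1^*-1\ge\lfloor\sqrt{2C-1}\rfloor-1$, whereas $m_2^*=\lfloor\sqrt{C-1}\rfloor\le\lfloor\sqrt{2C-1}\rfloor-1$ holds once $\sqrt{2C-1}-\sqrt{C-1}>1$, which in turn is valid for all $C\ge 169$ because $\sqrt{2C-1}-\sqrt{C-1}=C/(\sqrt{2C-1}+\sqrt{C-1})$ grows like $\sqrt{C}$ and already exceeds $1$ at $C=169$.

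The main obstacle is the structural identity $\psi(C)=\min\{\psi_1(C),\psi_2(C)\}$: one must argue carefully that restricting to primitive triples is without loss (so that the scaling factor in the general Pythagorean parametrization can be dropped) and that both directions of the inequality hold, so that no triple is missed in the lower bound and none is spuriously gained by relaxing the coprimality and parity conditions built into the primitive parametrization. By comparison, the numerical step $\psi_1(C)\le\psi_2(C)$ is routine once it is isolated as $m_2^*\le 2m_1^*-1$, and it is precisely there that the standing assumption $C\ge 169$ enters.
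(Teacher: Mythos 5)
Your proof is correct, and it follows the paper's high-level structure --- reduce to $\psi(C)=\min\{\psi_1(C),\psi_2(C)\}$ via the Pythagorean parametrization, then show $\psi_1(C)\le\psi_2(C)$ --- but both halves are executed differently, in ways worth noting. For the decomposition, the paper simply asserts that every integer triple $[a,b,c]$ has the form $[m^2-n^2,\,2mn,\,m^2+n^2]$ and invokes Lemmas~\ref{lem:optSolGivenC1} and~\ref{lem:optSolGivenC2}; that assertion is literally true only up to a scale factor (e.g.\ $(9,12,15)$ is not of this form, since $15$ is not a sum of two squares). Your explicit reduction to a primitive representative --- justified by the scale-invariance of $c/b$ together with the observation that passing to the primitive triple only relaxes $c\le C$ --- repairs this small gap, and you also correctly note that dropping the coprimality and parity conditions in \eqref{eq:optSolGivenC1} and \eqref{eq:optSolGivenC2} can only add genuine triples, so both directions of the inequality hold. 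For the comparison step, the paper bounds the two optima by continuous surrogates, $m_1'=\frac12\bigl(-1+\sqrt{2C-1}\bigr)\le m_1^*$ and $m_2'=\sqrt{C-1}\ge m_2^*$, and then checks the resulting real inequality $\frac{1}{1+C-2\sqrt{2C-1}}\le\frac{2}{C-2}$ for $C\ge 23$; you instead exploit the identity $4m_1^*(m_1^*-1)=(2m_1^*-1)^2-1$ to reduce the claim to the exact integer inequality $m_2^*\le 2m_1^*-1$, verified by floor arithmetic against $\lfloor\sqrt{2C-1}\rfloor$ using $\sqrt{2C-1}-\sqrt{C-1}=C/\bigl(\sqrt{2C-1}+\sqrt{C-1}\bigr)>1$, which in fact holds for all $C\ge 6$, so the standing assumption $C\ge 169$ is comfortably sufficient (you should state the monotonicity of this difference in $C$ explicitly --- it follows since $2\sqrt{C-1}>\sqrt{2C-1}$ for $C\ge 2$ --- rather than say it ``grows like $\sqrt{C}$''). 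Your integer reduction is arguably cleaner and avoids the paper's threshold computation; the paper's surrogate bounds have the side benefit of exhibiting $\psi_1(C)-1\le\frac{1}{1+C-2\sqrt{2C-1}}$ explicitly, which is what makes the accuracy's sublinear decay in $C$ visible and is reused in the remark following the lemma.
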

\begin{proof}
Since any  integer Pythagorean triple $[a, \ b, \ c]$ can be written in the form  $ [ m^2-n^2, \ 2mn, \ m^2+n^2] $ for some positive integers $m > n$, we first note that  $ \psi(C) = \min\{ \psi_1(C) , \psi_2(C)  \}$ due to  Lemmas~\ref{lem:optSolGivenC1} and \ref{lem:optSolGivenC2}.

Our proof strategy  to prove   $ \psi(C) = \min\{ \psi_1(C) , \psi_2(C)  \} = \psi_1(C) $ is to show that that an upper bound for  $\psi_1(C) -1 $ is smaller than a lower bound for $ \psi_2(C) -1 $. 
Firstly, consider $m_1^*$ as in the proof of Lemma~\ref{lem:optSolGivenC1}, and let $m_1'=  \frac12 ( - 1 +  \sqrt{ 2C - 1}) \le m_1^* $. Then, we  have 
\begin{equation*}
\begin{split}
{\psi_1(C)-1} 
 =    \frac{ {(m_1^*)}^2+ {(m_1^*-1)}^2}{2m_1^* (m_1^*-1)}  - 1
 = \frac{1}{ 2 m_1^* (m_1^*-1) }     
  \le \frac{1}{ 2 m'_1 (m'_1-1) }     
 = \frac{1}{ 1+C-2\sqrt{2C-1} } .
\end{split}
\end{equation*}
Secondly,  consider $m_2^*$ as in the proof of Lemma~\ref{lem:optSolGivenC2}, and let $m_2'=  \sqrt{ C - 1} \ge m_2^* $. Then, we  have 
\begin{equation*}
\begin{split}
{\psi_2(C)-1} 
 =    \frac{ {(m_2^*)}^2+ 1}{ {(m_2^*)}^2 - 1}  - 1
 =    \frac{ 2}{ {(m_2^*)}^2 - 1} 
 \ge   \frac{ 2}{ {(m'_2)}^2 - 1} 
 = \frac{2}{ C - 2} .
\end{split}
\end{equation*}
Finally, it is easy to show that $ \psi_1(C)-1 \le \frac{1}{ 1+C-2\sqrt{2C-1} } \le   \frac{2}{ C - 2} \le \psi_2(C)-1$ for $C \ge 23$, which satisfies our standing assumption in this section that $C\ge 169$.
 Hence, the result follows.
\end{proof}
We will use Lemma~\ref{lem:optSolGivenCtriple}  to compute the best  accuracy of the outer-approximation given the upper bound $C$, which is sublinear in $C$ as the proof indicates, in the next subsection.

\subsection{A Closed-Form Construction with the Best Approximation Accuracy}
\label{sec:deltaGivenM_closed}

In this section, we will construct a rational polyhedral outer-approximation of $L^3$ such that the size of the coefficients in its description is upper bounded  b y$C$ with an approximation accuracy of $\delta=\psi(C)-1$. Our construction in this part is the \textit{reverse} of the closed-form construction defined in Section~\ref{sec:heuristicConstruction}. In particular, let us define
\begin{equation}\label{eq:nu given C}
\check \nu^C = 2 + \lfloor\log_2 (\check h_{\check \nu^C}-2) \rfloor
\end{equation}
where
\[
\check h_{\check \nu^C} =  \bigg\lfloor \frac12 ( 1 +  \sqrt{ 2C - 1} ) \bigg\rfloor .
\]
Let us consider the recursion
\begin{equation}\label{eq:reverse recursion}
\check  h_{j-1} := \bigg\lceil \frac{\check h_j+1}{2} \bigg\rceil ,
\end{equation}
for $j = \nu^C, \dots, 3 $.
Consider the Pythagorean triples defined as
\begin{equation}\label{eq:heurConstructionGivenC}
(\check a_j, \check b_j, \check c_j) = \begin{cases}
(120, \ 119, \ 169) & \text{ if } j=1 \\
(2\check h_j-1,2\check h_j^2-2\check h_j, 2\check h_j^2-2\check h_j+1 )   & \text{ if } j = 2, \dots, \check\nu^C
\end{cases}
\end{equation}
and the corresponding angles
\begin{equation}\label{eq:heurConstructionAngleGivenC}
\check \theta_j := \arcsec   ( {\check c_j}/{ \check b_j }  ), \ j=1,\dots,\check\nu^C.
\end{equation}
Notice that $(\check a_j, \check b_j, \check c_j) $ are of the  type (ii) according to~\eqref{eq:twoTypes}  for $j=2,\dots,\check\nu^C$. 
Before proving the correctness of this construction in Proposition~\ref{thm:heurConstructionGivenC}, we provide 
some sample results  in Table~\ref{tab:10to5-10to7Heur} for $C=10^{5}, 10^{6}, 10^{7} $. 


\begin{table}[H]\footnotesize
\centering
\caption{Constructions of the polyhedral cone $ \tilde P^{\check \nu^C} ( \check \theta)$ for different $C$ values.}\label{tab:10to5-10to7Heur}
\begin{tabular}{c|cc|cc|cc}
\toprule

           & \multicolumn{ 2}{c|}{ $C=10^5$ ($\check\nu^C =9$)} & \multicolumn{ 2}{c|}{ $C=10^6$  ($\check\nu^C =11$) } & \multicolumn{ 2}{c}{ $C=10^7$  ($\check\nu^C =13$)} \\

      $j $ &  $\sec(\check \theta_j)$ &   $\check a_j,\check b_j,\check c_j$ & $\sec(\check \theta_j)$ &   $\check a_j,\check b_j,\check c_j$ & $\sec(\check \theta_j)$ &   $\check a_j,\check b_j,\check c_j$ \\

\midrule
         1 &    1.42017 & 120,119,169 &   1.420168 & 120,119,169 &  1.4201681 & 120,119,169 \\

         2 &    1.08333 &    5,12,13 &   1.083333 &    5,12,13 &  1.0833333 &    5,12,13 \\

         3 &    1.02500 &    9,40,41 &   1.041667 &    7,24,25 &  1.0416667 &    7,24,25 \\

         4 &    1.00893 & 15,112,113 &   1.011905 &   13,84,85 &  1.0166667 &   11,60,61 \\

         5 &    1.00238 & 29,420,421 &   1.003205 & 25,312,313 &  1.0055556 & 19,180,181 \\

         6 &    1.00062 & 57,1624,1625 &   1.000906 & 47,1104,1105 &  1.0014620 & 37,684,685 \\

         7 &    1.00016 & 113,6384,6385 &   1.000242 & 91,4140,4141 &  1.0003968 & 71,2520,2521 \\

         8 &    1.00004 & 225,25312,25313 &   1.000062 & 179,16020,16021 &  1.0001006 & 141,9940,9941 \\

         9 &    1.00001 & 447,99904,99905 &   1.000016 & 355,63012,63013 &  1.0000253 & 281,39480,39481 \\

        10 &            &            &   1.000004 &  707,249924,249925          &  1.0000064 & 561,157360,157361 \\

        11 &            &            &   1.000001 &  1413,998284,998285          &  1.0000016 & 1119,626080,626081 \\

        12 &            &            &            &            &  1.0000004 & 2237,2502084,2502085 \\

        13 &            &            &            &            &  1.0000001 & 4471,9994920,9994921 \\

\bottomrule
\end{tabular}  
\end{table}

Our main result in this section is the following proposition:

\begin{proposition}\label{thm:heurConstructionGivenC}
%
%
Let $\check \theta_0 =\pi/2$.
Consider the polyhedral set $\tilde P^{\check  \nu^C} ( \check \theta)$ with $\check\nu^C$ as defined in~\eqref{eq:nu given C}
and $\check \theta_j$, $j=1,\dots, \check \nu_C$ as defined in \eqref{eq:heurConstructionAngleGivenC}. 
Then, we have
\[
L^3 \subseteq \tilde P^{ \check\nu^C} ( \check \theta) \subseteq  \psi(C) L^3.
\]
Moreover,
\[
 \check\nu^C   -  \nu_{\psi(C)-1}  \le 1.
\]
\end{proposition}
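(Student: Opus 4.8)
The plan is to prove the two assertions separately, leaning on the machinery already in place. For the sandwich $L^3 \subseteq \tilde P^{\check\nu^C}(\check\theta) \subseteq \psi(C)L^3$ I would invoke Proposition~\ref{thm:outerL3general}, noting that only one-sided information is needed from each direction: the inclusion $L^3 \subseteq \tilde P^{\check\nu^C}(\check\theta)$ requires just the half-angle condition $\check\theta_j \ge \check\theta_{j-1}/2$ for $j=1,\dots,\check\nu^C$, while $\tilde P^{\check\nu^C}(\check\theta) \subseteq \sec(\check\theta_{\check\nu^C})L^3$ holds for system~\eqref{eq:outer} irrespective of angle spacing. It then remains to identify $\sec(\check\theta_{\check\nu^C})$ with $\psi(C)$. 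Since $\check h_{\check\nu^C} = \lfloor\frac12(1+\sqrt{2C-1})\rfloor = m_1^*$ of Lemma~\ref{lem:optSolGivenC1}, a direct computation on the type-(ii) triple gives $\sec(\check\theta_{\check\nu^C}) = \frac{2(m_1^*)^2-2m_1^*+1}{2(m_1^*)^2-2m_1^*} = \frac{(m_1^*)^2+(m_1^*-1)^2}{2m_1^*(m_1^*-1)} = \psi_1(C) = \psi(C)$ by Lemma~\ref{lem:optSolGivenCtriple}; the same identity shows the largest coefficient $\check c_{\check\nu^C} = (m_1^*)^2+(m_1^*-1)^2 \le C$, as required.

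The heart of the first assertion is the half-angle condition. I would first simplify the backward recursion~\eqref{eq:reverse recursion} through the substitution $g_j := \check h_j - 2$, under which $\check h_{j-1} = \lceil(\check h_j+1)/2\rceil$ becomes exactly $g_{j-1} = \lfloor g_j/2\rfloor$ (verified by splitting on the parity of $g_j$). Iterating floor-halving $\check\nu^C-2$ times yields $g_2 = \lfloor g_{\check\nu^C}/2^{\check\nu^C-2}\rfloor$, and since definition~\eqref{eq:nu given C} states precisely $\check\nu^C-2 = \lfloor\log_2 g_{\check\nu^C}\rfloor$, i.e. $2^{\check\nu^C-2} \le g_{\check\nu^C} < 2^{\check\nu^C-1}$, we obtain $g_2 = 1$, hence $\check h_2 = 3$. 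This pins the bottom of the tower: for $j=1$, $\check\theta_1 = \arctan(120/119) > \pi/4 = \check\theta_0/2$; for $j=2$, $\check h_2=3$ gives the triple $(5,12,13)$ and the exact identity $\check\theta_2 = \check\theta_1/2$, since $\sin(2\check\theta_2) = 2\cdot\frac{5}{13}\cdot\frac{12}{13} = \frac{120}{169} = \sin\check\theta_1$. For $j\ge3$ both triples are of type (ii), and writing $p=\check h_j$, $q=\check h_{j-1}$, the target $\tan(\check\theta_j) \ge \tan(\check\theta_{j-1}/2)$ reduces, by the same manipulation as in Lemma~\ref{lem:halfAngle}, to $(2p-1)(2q-1) \ge 2p(p-1)$; substituting $q=\lceil(p+1)/2\rceil$ and separating the parity of $p$ makes the left-hand side exceed the right by $p$ (odd $p$) or $3p-1$ (even $p$), both positive, so $\check\theta_j \ge \check\theta_{j-1}/2$ holds throughout.

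For the size comparison, I would first record that $\nu_{\psi(C)-1} = \lceil\log_2\frac{\pi}{2\arcsec(\psi(C))}\rceil = \lceil\log_2\frac{\pi}{2\check\theta_{\check\nu^C}}\rceil$, since $\arcsec(\psi(C)) = \check\theta_{\check\nu^C}$. Writing $x := \log_2\frac{\pi}{2\check\theta_{\check\nu^C}}$, the claim $\check\nu^C \le \lceil x\rceil + 1$ is equivalent (using $\lceil x\rceil \ge \check\nu^C-1 \iff x > \check\nu^C-2$) to the strict inequality $\check\theta_{\check\nu^C} < \pi/2^{\check\nu^C-1}$. I would establish the latter via the chain, with $m := \check h_{\check\nu^C} \ge 3$ (as $C\ge169$ forces $m\ge9$),
\[
\check\theta_{\check\nu^C} = \arcsec\Big(1+\tfrac{1}{2m(m-1)}\Big) < \sqrt{\tfrac{2}{m(m-1)}} \le \frac{\pi}{2(m-2)} \le \frac{\pi}{2^{\check\nu^C-1}},
\]
where the strict step is Fact~\ref{fact:fact2} with $\delta = \frac{1}{2m(m-1)} > 0$, the middle step rearranges to $8(m-2)^2 \le \pi^2 m(m-1)$ (a quadratic in $m$ with $\pi^2-8>0$, positive for $m\ge3$), and the last step follows from $2^{\check\nu^C-1} = 2\cdot 2^{\check\nu^C-2} \le 2(m-2)$, again by~\eqref{eq:nu given C}.

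I expect the size comparison to be the most delicate step, since it hinges on converting $\check\nu^C - \nu_{\psi(C)-1} \le 1$ into a clean analytic inequality on $\check\theta_{\check\nu^C}$ through careful ceiling bookkeeping and then on the three-term chain having just enough slack; the positivity of $\pi^2-8$ is exactly what keeps the middle inequality valid. A secondary subtlety is the odd-parity case of the half-angle property: there Lemma~\ref{lem:halfAngle} does not apply verbatim — it corresponds only to the even case $\check h_j = 2(\check h_{j-1}-1)$ — so the direct tangent computation is genuinely needed to cover the case $\check h_j = 2\check h_{j-1}-1$.
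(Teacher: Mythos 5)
Your proof is correct, and it follows the paper's overall strategy: the half-angle condition $\check\theta_j \ge \check\theta_{j-1}/2$ yields $L^3 \subseteq \tilde P^{\check\nu^C}(\check\theta)$, the identification $\sec(\check\theta_{\check\nu^C}) = \frac{(m_1^*)^2+(m_1^*-1)^2}{2m_1^*(m_1^*-1)} = \psi_1(C) = \psi(C)$ via Lemmas~\ref{lem:optSolGivenC1} and~\ref{lem:optSolGivenCtriple} gives the outer inclusion, and Fact~\ref{fact:fact2}-type estimates drive the size comparison. But you do genuinely more in two places, and both additions are worth recording. First, the paper simply omits the verification of the half-angle condition, asserting it is ``similar'' to Lemma~\ref{lem:halfAngle}; you supply it, and your parity analysis shows the omission is not entirely innocent: the substitution $g_j = \check h_j - 2$ turns the backward recursion~\eqref{eq:reverse recursion} into exact floor-halving, which together with $2^{\check\nu^C-2} \le g_{\check\nu^C} < 2^{\check\nu^C-1}$ from~\eqref{eq:nu given C} pins $\check h_2 = 3$ (so the $(5,12,13)$ and $(120,119,169)$ base cases go through as before), while the odd-parity step $\check h_j = 2\check h_{j-1}-1$ is \emph{not} covered by Lemma~\ref{lem:halfAngle} verbatim (that lemma treats only $\check h_j = 2(\check h_{j-1}-1)$), so your direct reduction to $(2p-1)(2q-1) \ge 2p(p-1)$, with surplus $p$ for odd $p$ and $3p-1$ for even $p$, is genuinely needed. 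Second, for the size bound the paper's displayed chain ends with $2+\log_2\bigl(\tfrac{4}{\pi\sqrt2}\bigr) \approx 1.85$ followed by ``$\le 1$'', a step valid only because the left-hand side is an integer — the paper leaves that rounding implicit. Your version makes it explicit: converting $\check\nu^C - \nu_{\psi(C)-1} \le 1$ into the strict threshold inequality $\check\theta_{\check\nu^C} < \pi/2^{\check\nu^C-1}$ and proving it through $8(m-2)^2 \le \pi^2 m(m-1)$ lets the slack $\pi^2 - 8 > 0$ do the work that integrality does in the paper; the same ingredients (Fact~\ref{fact:fact2} and the content of Fact~\ref{fact:fact4}) appear, just with cleaner ceiling bookkeeping. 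One pedantic caveat: Fact~\ref{fact:fact2} is stated with $\le$, not $<$; strictness does follow from its proof ($\varphi$ increases strictly on $(0,\infty)$ from $\varphi(0)=0$), and even without it your middle inequality is strict for $m \ge 3$ (and $C \ge 169$ forces $m \ge 9$, as you note), so the chain remains strict either way. Both routes prove the same statement; yours is self-contained where the paper defers, at the cost of a somewhat longer write-up.
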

\begin{proof}
Firstly, one can check that the angles $\check \theta_j $ satisfy the relation $\check \theta_j \ge \check \theta_{j-1}/2 $, for $j=1,\dots, \check\nu^C$ (we omit the proof due to its similarity with  the proofs of Lemmas~\ref{lem:halfAngle} and \ref{thm:heurConstruction}). Therefore, we conclude that $L^3 \subseteq \tilde P^{ \check\nu^C} ( \check \theta)$. Also, due to the choice of $(\check b_{\check \nu^C}, \check c_{ \check\nu^C})$ and the relation 
\[
\psi(C) = \frac{\check c_{ \check\nu^C} }{ \check b_{\check \nu^C} },
\]
established in Lemma~\ref{lem:optSolGivenCtriple}, we have $\tilde P^{ \check\nu^C} ( \check \theta) \subseteq  \psi(C) L^3$.

Finally, we will prove the last assertion of the proposition. Let $m = \check h_{\check\nu^C} =   \lfloor \frac12 ( 1 +  \sqrt{ 2C - 1} \rfloor$ as in Lemma~\ref{lem:optSolGivenC1}.  Then, using equations~\eqref{eq:nuOptimal} and \eqref{eq:nu given C}, we have
\begin{equation*}
\begin{split}
\check \nu^C   -  \nu_{\psi(C)-1} 
& =   2 + \lfloor\log_2 (m-2) \rfloor -  \left\lceil  {\log_2 \frac{\pi}{2\arcsec( \psi(C))}}  \right \rceil \\
& \le   2 + \log_2 (m-2)   -     {\log_2 \frac{\pi}{2\arcsec( \frac{1+2m^2-2m}{2m^2-2m} )}}    \\
& = 2 +  \log_2 \bigg [ \frac2\pi  (m-2) \underbrace{ \arcsec(1+\frac{1}{2m^2-2m})}_{ \le {2}/\sqrt{2m^2-2m}} \bigg]    \\
& \le 2 +  \log_2 \bigg [ \frac4\pi  \frac{(m-2)}{\sqrt{2m^2-2m}} \bigg]    \\
& \le 2 +  \log_2 \bigg ( \frac{4}{\pi \sqrt{2}}    \bigg )    \\
& \le 1.
\end{split}
\end{equation*}
Here, the first inequality follows by the basic properties of rounding operators, the second inequality follows due to Fact~\ref{fact:fact2} and the third inequality follows  due to the following fact:
\begin{fact}\label{fact:fact4}
$  \frac{m-2}{ \sqrt{2m^2 - 2m} } \le  \frac{1}{\sqrt{2}} $ for $m \ge 2$.
\end{fact}
\begin{proof}
Since $m \ge 2$, we have
\begin{equation*}
\begin{split}
 \frac{m-2}{ \sqrt{2m^2 - 2m} } \le  \frac{1}{\sqrt{2}}
& \iff   {\sqrt{2}} (m-2) \le { \sqrt{2m^2 - 2m} }   \\
& \iff   2 (m^2-4m+4) \le { {2m^2 - 2m} }   \\
& \iff  \frac43 \le m,
\end{split}
\end{equation*}
which proves the claim.
\end{proof}

\end{proof}

%

We have two remarks related to  Proposition~\ref{thm:heurConstructionGivenC}: Firstly, the approximation accuracy $\delta = {\blue\psi}(C) - 1$ given in Proposition~\ref{thm:heurConstructionGivenC} cannot be improved due to  Lemma~\ref{lem:optSolGivenCtriple}. The reason is that  $\psi(C)$  is the smallest  value of the  the arcsecant of an angle of a right triangle corresponding to a Pythagorean triple whose hypotenuse is upper bounded by $C$. 
Secondly, the size of the construction in terms of $\check\nu^C$ is larger than the size of the optimal construction under the same accuracy $\delta = {\blue\psi}(C) - 1$ from Section~\ref{sec:extendL3} by only at most one in the worst case. Note, however, that the value of  $\nu_{\psi(C)-1}$ may not be achievable since it might possibly require coefficients larger than $C$. 

In Table~\ref{tab:opt vs heur nuC}, we report the values of $\nu_{\psi(C)-1}$  and $ \check \nu^C$  for different $C$ values.
As predicted by Proposition~\ref{thm:heurConstructionGivenC}, the difference between these two quantities is at most one. In fact,  $\nu_{\psi(C)-1} > \check\nu^C$ for only $C=10^7$ among the upper bounds considered here. This has motivated us to further explore different ways to construct a different outer-approximation as explained in the next subsection.

\begin{table}[H]\small
\caption{Comparison of $\nu_{\psi(C)-1}$  and $ \check \nu^C$  for different $C$ values.}\label{tab:opt vs heur nuC}
\centering
\begin{tabular}{c|cc}
\toprule
   $C$ &   $\nu_{\psi(C)-1}$  & $ \check\nu^C$ \\
\midrule

  $ 10^{5} $&          9 &          9 \\

   $10^{6} $&          11 &         11 \\

 $ 10^{7}$ &         12 &         13 \\

\bottomrule
\end{tabular}  
\end{table}

\subsection{An Improved Construction with A Smaller Size}
\label{sec:deltaGivenM_optimized}

Although the recursion given in~\eqref{eq:reverse recursion} leads to an outer-approximation with the desired accuracy $\psi(C)-1$ given an upper bound $C$ by guaranteeing the condition  $\check \theta_j \ge \check \theta_{j-1}/2 $, for $j=1,\dots, \check\nu^C$, it only utilizes Pythagorean triples of  type (ii)  according to~\eqref{eq:twoTypes}. However, it might be possible to increase the angles $ \theta_{j-1} $ more aggressively (recall that we are deciding them in a backward manner) while keeping the condition  $  \theta_{j-1} \le  2 \theta_{j} $ intact so that the size of the eventual formulation is smaller in terms of $\nu^C$. According to Proposition~\ref{thm:heurConstructionGivenC}, this might be possible as well.

For this purpose, we propose  Algorithm~\ref{alg:imprConstGivenC}, which tries to find the next angle $\theta_{j-1}$, which is as close as  possible to the \textit{target} angle $2 \theta_{j}$ at each step. This entails solving the nonconvex nonlinear integer program~\eqref{eq:optSolGivenC_alg}. We note that this problem can be reformulated by moving the objective function to the constraints by defining a new variable and solving the resulting nonconvex quadratically constrained integer program by the solver Gurobi (version 9).
\begin{algorithm}
\caption{Improved construction of $\tilde P^{ \nu^C} (\theta)$.}
\label{alg:imprConstGivenC}
\begin{algorithmic}
\STATE Obtain $(a_0',b_0',c_0')\in\mathbb{Z}^3$ as an optimal solution of problem~\eqref{eq:optSolGivenC} and set $\theta'_0= \arcsec(c'_{0}/b'_{0})$.
\STATE Set $j=0$ and $ \texttt{target}  = 2 $.
\WHILE{$ \texttt{target}  > 0 $}
\STATE Set $\bar C = \max\{169, c'_{-j}\}$.
\STATE Set $\texttt{target} = \sec(2\theta'_{-j})$.
\STATE Set $j = j+1$.
\STATE Obtain
\begin{equation}\label{eq:optSolGivenC_alg}
(a_{-j}',b_{-j}',c_{-j}') = \argmax_{ a\in\mathbb{Z},  b\in\mathbb{Z},  c\in\mathbb{Z}  } \bigg\{ \frac{c}{b} : \ a^2+b^2 = c^2, c \le \bar C, \ \frac{c}{b} \le \texttt{target} , \ a,b,c \ge 1 \bigg\},
\end{equation}
and set  $\theta'_{-j}= \arcsec(c'_{-j}/b'_{-j})$.
\ENDWHILE
\STATE Set $\nu^C = j + 1$.
\FOR{$j'=1,\dots,\nu^C$}
\STATE Set  $\theta_{j'} = \theta'_{j' -\nu^C}$.
\ENDFOR

\end{algorithmic}
\end{algorithm}

We run Algorithm~\ref{alg:imprConstGivenC} with $C=10^7$ since the size of the formulations obtained from previous subsection is optimal  for the other values given in Table~\ref{tab:opt vs heur nuC}. We observe that there exists an outer-approximation with the same approximation accuracy with a smaller (and optimal) size of $\nu^C=12$ for $C=10^7$. We give the coefficients of such a construction in Table~\ref{tab:10to7Impr}.

\begin{table}[H]\footnotesize
\centering
\caption{An improved construction of the polyhedral cone $\tilde P^{ \nu^C} (\theta)$  for $C=10^7$ (with $\nu^C=12$).}\label{tab:10to7Impr}
\begin{tabular}{c|cc}
\toprule
         $j $ &  $\sec(  \theta_j)$ &   $  a_j,  b_j,  c_j$  \\
\midrule
         1 &  1.5568182 & 105,88,137 \\

         2 &  1.1038961 &   36,77,85 \\

         3 &  1.0250000 &    9,40,41 \\

         4 &  1.0061920 & 36,323,325 \\

         5 &  1.0016340 & 35,612,613 \\

         6 &  1.0004082 & 140,4899,4901 \\

         7 &  1.0001020 & 280,19599,19601 \\

         8 &  1.0000255 & 560,78399,78401 \\

         9 &  1.0000064 & 559,156240,156241 \\

        10 &  1.0000016 & 2236,1249923,1249925 \\

        11 &  1.0000004 & 4472,4999695,4999697 \\

        12 &  1.0000001 & 4471,9994920,9994921 \\

\bottomrule
\end{tabular}  
\end{table}



We conclude this section by the following remark:
We note that given an upper bound on the coefficients of the polyhedral representation of the outer-approximation as $C$, the best approximation guarantee we can obtain for $L^N$ is
\[\epsilon =  \psi(C)^{\lceil \log_2(N-1) \rceil} - 1 ,\]
where $\psi(C)$ is defined as in~\eqref{eq:optSolGivenC}. It is clear that if $C$ is kept constant, the accuracy of the approximation will diminish with larger values of $N$.

\section{Applications}
\label{sec:applications}

In this section, we  discuss two possible applications of rational polyhedral outer-approximations of the second-order cone { with a more theoretical focus, and another application with a computational focus. All} of these applications  involve the  set,
\begin{equation}\label{eq:setS}
\mathcal{S} := \bigcap_{i=1}^I \mathcal{B}_{R_i} (u_i),
\end{equation}
with $u_i \in  \mathbb{Z}^N$ and $R_i \in \mathbb{Z}_{++} $ for $i=1,\dots, I$, where the set  $\mathcal{B}_R (u) := \{ x\in\mathbb{R}^N : \|x-u\|_2 \le R\}$ denotes the  ball centered at $u \in \mathbb{R}^N$ with radius $R > 0 $. We will also assume that $\cS \cap \mathbb{Z}^N \neq \emptyset$. We choose the set $\cS$ for  ease of illustration although our derivations in this section remain valid for ellipsoids defined by integral data.

In the first application discussed in Section~\ref{sec:intgap}, we analyze the problem of estimating the integrality gap of optimizing a linear function over the set $\cS \cap \mathbb{Z}^N $.  In the second application discussed in Section~\ref{sec:count}, we analyze the problem of counting the number of integral vectors in the set $\cS$. Motivated by the fact that both problems are well-understood for rational polyhedra, our approach in this section is to  transform these problems  into their approximate or equivalent versions involving the rational polyhedral outer-approximations of the set $\cS$. This will  allow us to apply the well-known results from the literature for rational polyhedra. We will also discuss the importance of the rationality assumption for both applications. { Finally, we compare the efficiency of minimizing a linear function over the sets  $\cS  $ and  $\cS \cap \mathbb{Z}^N $ via different polyhedral outer-approximations   in Section~\ref{sec:optimization}. Our preliminary experiments suggest that there might be potential advantages of using the outer-approximations defined with rational or integer coefficients compared to the one that involves irrational coefficients.}

%
%
%

{Our analysis in this section relies on the following result:
\begin{proposition}\label{prop:SvsT}
Consider the set $\cS \subseteq \mathbb{R}^N$ as defined above. Then, there exists a set $\cT \subseteq \mathbb{R}^N$ such that 
\begin{enumerate}
\item $\cT$ is a  polytope defined by integral data,
\item $\cT \supseteq \cS$,
\item $  \cT \cap \mathbb{Z}^N =  \cS \cap \mathbb{Z}^N$.
\end{enumerate} 
\end{proposition}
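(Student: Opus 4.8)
The plan is to approximate each ball $\mathcal{B}_{R_i}(u_i)$ separately by a rational polytope trapped between $\mathcal{B}_{R_i}(u_i)$ and a slightly enlarged ball $\mathcal{B}_{(1+\delta)R_i}(u_i)$, and then take $\cT$ to be the intersection of these approximations. Properties (i) and (ii) will be nearly immediate from the outer-approximation guarantee, whereas property (iii) will hinge on an \emph{integrality gap} that strictly separates the integer points lying outside a ball from its boundary.

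First I would record the number-theoretic observation underlying the gap. If $z \in \mathbb{Z}^N$ lies outside $\mathcal{B}_{R_i}(u_i)$, then $\|z-u_i\|_2 > R_i$; since $z,u_i \in \mathbb{Z}^N$ and $R_i \in \mathbb{Z}_{++}$, the integer $\|z-u_i\|_2^2$ strictly exceeds $R_i^2$ and is therefore at least $R_i^2+1$. Hence every integer point outside the ball satisfies $\|z-u_i\|_2 \ge \sqrt{R_i^2+1} > R_i$, and this strict separation is precisely what lets a sufficiently tight outer-approximation avoid capturing spurious integer points.

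Next I would fix $\delta>0$ small enough that $(1+\delta)R_i < \sqrt{R_i^2+1}$ for every $i$; as the $R_i$ are finitely many fixed positive integers, any $\delta < \min_i\big(\sqrt{1+R_i^{-2}}-1\big)$ is positive and admissible. Writing the ball constraint as the slice $(x-u_i,R_i) \in L^{N+1}$ and invoking the rational polyhedral outer-approximation $Q^{N+1}$ of $L^{N+1}$ from Section~\ref{sec:extendLn} (equivalently $\bar Q^{N+1}$ from Section~\ref{sec:extendLn'}), with its parameters tuned so that $L^{N+1} \subseteq Q^{N+1} \subseteq (1+\delta)L^{N+1}$, I obtain a polyhedron $\cT_i \subseteq \mathbb{R}^N$ — the projection onto the $x$-variables of the extended integral formulation — satisfying $\mathcal{B}_{R_i}(u_i) \subseteq \cT_i \subseteq \mathcal{B}_{(1+\delta)R_i}(u_i)$. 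Setting $\cT := \bigcap_{i=1}^I \cT_i$, property (ii) follows since $\cS = \bigcap_i \mathcal{B}_{R_i}(u_i) \subseteq \bigcap_i \cT_i = \cT$, and $\cT$ is bounded (hence a polytope) because $\cT \subseteq \bigcap_i \mathcal{B}_{(1+\delta)R_i}(u_i)$.

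Finally I would verify property (iii). The inclusion $\cS \cap \mathbb{Z}^N \subseteq \cT \cap \mathbb{Z}^N$ is immediate from (ii). For the reverse, suppose $z \in \cT \cap \mathbb{Z}^N$ but $z \notin \cS$; then $z \notin \mathcal{B}_{R_i}(u_i)$ for some $i$, so the integrality gap gives $\|z-u_i\|_2 \ge \sqrt{R_i^2+1}$, while $z \in \cT \subseteq \cT_i \subseteq \mathcal{B}_{(1+\delta)R_i}(u_i)$ forces $\|z-u_i\|_2 \le (1+\delta)R_i < \sqrt{R_i^2+1}$, a contradiction. The conceptual crux is thus the integrality-gap observation together with the correct calibration of $\delta$; the only remaining care is bookkeeping — checking that substituting the integer data $(x-u_i,R_i)$ and projecting out the auxiliary variables keeps the description rational, so that clearing denominators yields the integral-data polytope required in (i).
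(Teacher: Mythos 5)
Your proposal is correct and follows essentially the same route as the paper: you outer-approximate each ball by the rational polyhedral approximation of the cone with accuracy calibrated below $\sqrt{1+1/R_i^2}-1$ (the paper's choice $\epsilon_i \in (0,\sqrt{1+1/R_i^2}-1)$, per ball rather than your uniform $\delta$), intersect, and use the same integrality-gap observation that an integer point outside $\mathcal{B}_{R_i}(u_i)$ has squared distance at least $R_i^2+1$. Your additional remarks (boundedness of $\cT$, working with the slice $(x-u_i,R_i)\in L^{N+1}$, and the extended integral-data description) are just explicit bookkeeping of steps the paper leaves implicit.
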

\begin{proof}
Firstly, let us choose $\epsilon_i \in \mathbb{Q}$ such that $\epsilon_i \in (0, \sqrt{1+1/R_i^2} - 1)$ for $i=1,\dots,I$. Then, due to Proposition~\ref{thm:outerL3rational} and the tower-of-variables construction from Section~\ref{sec:extendLn}, we can construct a polyhedral rational outer-approximation $\cQ_i$ of the ball $\mathcal{B}_{R_i} (u_i)$ satisfying
\[
\mathcal{B}_{R_i} (u_i) \subseteq \cQ_i  \subseteq (1+\epsilon_i) \mathcal{B}_{R_i} (u_i) .
\]
Moreover, due to our choice of $\epsilon_i$, we have $\mathcal{B}_{R_i} (u_i) \cap \mathbb{Z}^N = \mathcal{Q}_i \cap \mathbb{Z}^N$ for $i=1,\dots,I$ since
\[
\{ x\in\mathbb{Z}^N: \ \| x - u_i \|_2 \le R_i  \} = \{ x\in\mathbb{Z}^n: \ \| x - u_i \|_2 < \sqrt{R_i^2 + 1}  \}.
\]
Finally, we define the set
\[
\cT := \bigcap_{i=1}^I \cQ_i,
\]
which satisfies the three desired properties stated in the proposition.
\end{proof}

\subsection{Estimating the Integrality Gap of Optimizing a Linear Function over $\cS \cap \mathbb{Z}^N$}
\label{sec:intgap}

The integrality gap of a mixed-integer program, which is defined as the difference between  its optimal value and the optimal value of its continuous relaxation, is an important measure of both theoretical and practical interest. More precisely, let $\mathcal{X} \subseteq \mathbb{R}^M$ and 
$\alpha \in \mathbb{R}^M$. Then, we define the integrality gap of minimizing a linear function $\alpha^T x$  over $\cX\cap\cL \neq \emptyset$, where $\cL =  \mathbb{Z}^N \times  \mathbb{R}^{M-N}$, as
\[
IG_{\cL} (\mathcal{X} ) = \inf_{x \in \cX \cap  \cL  }   \alpha^T x   - \inf_{x \in \cX  } \alpha^T x . 
\]
The  integrality gap of a mixed-integer linear program defined by integral data is well-studied. 
For instance, if $\cX = \{x \in \mathbb{R}^M: Ex \ge f \} $ where  $E$ is integral, a classical result from \cite{cook1986sensitivity} states that 
\[
IG_{\cL} (\mathcal{X} )  \le M \Delta(E) \|\alpha \|_1,
\]
where $\Delta(E)$ is the maximum of the absolute values of the subdeterminants of the integral matrix $E$. In a recent paper \cite{paat2020distances}, it has been shown that the dependence on the dimension can be reduced to the number of integer variables $N$ instead of the number of total variables $M$.

We will now analyze the integrality gap of minimizing a linear function $\alpha^T x$  over the set  $\cS \cap \mathbb{Z}^N$, that is, $ IG_{\mathbb{Z}^N} ( \cS) $,  
where $\alpha \in \mathbb{R}^N$.
To the best of our knowledge, the integrality gap of an integer program defined over the integral vectors in the intersection of balls has not been analyzed in detail.  
For this purpose, we will utilize Proposition~\ref{prop:SvsT}  and conclude  that  $ IG_{\mathbb{Z}^N} ( \cS)  \le   IG_{\mathbb{Z}^N} ( \cT) $. Notice that $\cT$ is a polytope defined by integral data but its description contains additional continuous variables ($\xi^j$ and $\eta^j$). Therefore, we can use the result from \cite{paat2020distances} and obtain an upper bound for $ IG_{\mathbb{Z}^N} ( \cT) $, and consequently for $ IG_{\mathbb{Z}^N} ( \cS) $.

We remark that the proofs from  \cite{cook1986sensitivity, paat2020distances} explicitly use the fact that the underlying polyhedron is defined by integral data. Therefore, we would not be able to obtain a similar result for estimating the integrality gap over $  \cS \cap \mathbb{Z}^N$
 if Ben-Tal and Nemirovski's outer-approximation is used for $\cT$ since it contains irrational coefficients. 


%
%
%

\subsection{Counting the Number of Integral Vectors in $\cS$}
\label{sec:count}

The problem of counting the number of integral vectors in a convex set goes back to at least 	Gauss, who made the first progress to estimate the number of integral vectors in a circle of integer radius (this problem is now called the Gauss circle problem). 
 In 1994, Barvinok  showed, in his breakthrough paper, that the number of integral vectors in a rational polytope can be computed exactly in polynomial time when the dimension is fixed \cite{barvinok1994} (previously, similar results were known for polytopes of dimension at most four \cite{dyer1991}). Since then, practical implementations of his algorithm have been developed (see, for example, \cite{latte2004, verdoolaegebarvinok}).

Now suppose that we are interested in counting the number of integral vectors in the set $\cS$. At this point, we will again utilize Proposition~\ref{prop:SvsT}
since $   \cS \cap \mathbb{Z}^N =  \cT \cap \mathbb{Z}^N $, and count the number of integral vectors in the set $\cT$ instead. Although it is tempting to directly use Barvinok's algorithm for this task, there are two subtleties. Firstly, the inequality description of the polyhedral set $\cT$ is given in an extended space, containing  additional continuous variables ($\xi^j$ and $\eta^j$). However, this issue can be resolved by a linear transformation of variables
\[
\xi^j \leftarrow \frac{1}{ \prod_{j'=1}^j c_{j'} } \xi^j \text{ and } \eta^j  \leftarrow \frac{1}{ \prod_{j'=1}^j c_{j'} } \eta^j,
\]
and requiring $ \xi^j$ and $ \eta^j$ variables to be integer (note that this restriction does not affect the proof of 
Proposition~\ref{thm:outerL3general}, hence, the outer-approximation remains valid). Secondly, although all the variables including the additional variables are integer in the new extended formulation for $\cT$, our aim was to count the integral vectors in the original space, which requires a projection. However, this issue has already been resolved  by Barvinok and Woods  \cite{barvinok2003proj}, and their integer projection algorithm has been implemented as well \cite{koppe2008implementation}.

We remark that the rationality of the underlying polyhedron is quite important for  Barvinok-type results, which do not extend fully to irrational polyhedra  from both theoretical and implementation perspectives \cite{barvinok2007lecture}.

}

{
\subsection{Computational Comparison of Different Outer-Approximations}
\label{sec:optimization}

In this section, we present some preliminary computational experiments, which suggest  that there may be potential advantages of the outer-approximations proposed in this paper. Let us now explain the experimental setup in detail and discuss our findings.

Given $\beta\in\mathbb{R}^N$, suppose that our aim  is to solve
\[
 \min_{ x \in \cS }   \beta^T x  \quad \text{ and } \quad  \min_{ x \in \cS \cap \mathbb{Z}^N }   \beta^T x,
\]
by outer-approximating the set $\cS$ defined in \eqref{eq:setS} with a polyhedron $\mathcal{R}$. Hence, we are interested in solving
\[
 \min_{ x \in \mathcal{R}}   \beta^T x  \quad \text{ and }  \quad \min_{ x \in \mathcal{R} \cap \mathbb{Z}^N  }   \beta^T x,
\]
which we will refer  as \textit{LP Version} and \textit{IP Version}, respectively. 
We compare the following  three alternative formulations for  $\mathcal{R}$:
\begin{itemize}
\item
$\mathbb{R}$-Formulation \cite{ben2001polyhedral}: Formulation~\eqref{eq:outer} with real coefficients specified in~\eqref{eq:nuOptimal}.
\item
$\mathbb{Q}$-Formulation: Formulation~\eqref{eq:outer} with rational coefficients specified in Table~\ref{eq:optConstruct-4-6}.
\item
$\mathbb{Z}$-Formulation: Formulation~\eqref{eq:outerInteger} with integer coefficients specified in Table~\ref{eq:optConstruct-4-6}.
\end{itemize}
We remark that $\mathbb{R}$-Formulation and $\mathbb{Q}$-Formulation are subject to rounding errors due to the finite precision of computers since we cannot  input exact real and rational numbers to a solver such as Gurobi.

In our computational experiments, we vary the dimension $N$, and choose the number of balls as $I=N$, their centers as $u_i=10e_i$, where $e_i$ is the $i$-th unit vector, and their radii as $R_i = 10 i$, $i=1,\dots,N$. We fix $\delta=10^{-7}$ in all computations in this section.
We sample 100 random objective function directions $\beta$ from the boundary of the unit ball (the coefficients are rounded to six decimal points).  We use Gurobi 9 on a 64-bit personal computer with Intel Core i7 CPU 2.60GHz processor (16 GB RAM) with the default settings except the following: i) In an attempt to limit the variations in the solution procedure, we set the  parameter \texttt{Method} to primal simplex method, which has worked the best in our preliminary experiments.  ii) The relative optimality tolerance parameter \texttt{MIPGap} is set to 0.01 for $N=32$. 

We summarize our computational results in Table~\ref{tab:optresults} by reporting the following metrics:
\begin{itemize}
\item Time (s): The computational time in seconds.
\item \# Iters: The total number of simplex iterations.
\item \# Nodes: The number of branch-and-bound nodes (for the IP Version).
\end{itemize}
We provide both the arithmetic average and the number of times each method \textit{strictly} wins in that category in the ``Averages'' and ``\# Wins'' columns, respectively.

\setlength{\tabcolsep}{5pt}
\begin{table}[H]
\centering
{
\footnotesize
\caption{Computational results for 100 random objective functions under different settings.}\label{tab:optresults}
\begin{tabular}{r|l|rrr|rrr|rrr|rrr}

 \multicolumn{ 1}{c }{ }           &   \multicolumn{ 1}{c }{ }              &                                    \multicolumn{ 6}{c|}{LP Version} &                                   \multicolumn{ 6}{c}{IP Version} \\

\cline{3-14}

 \multicolumn{ 1}{c }{ }           &   \multicolumn{ 1}{c| }{ }              &        \multicolumn{ 3}{c|}{Averages} &            \multicolumn{ 3}{c|}{\# Wins} &        \multicolumn{ 3}{c|}{Averages} &            \multicolumn{ 3}{c}{\# Wins} \\

\cline{3-14}

      \multicolumn{ 1}{c }{ }           &   \multicolumn{ 1}{c| }{ }           &          $\mathbb{R}$ &          $\mathbb{Q}$ &         $ \mathbb{Z} $ &          $\mathbb{R}$ &         $ \mathbb{Q}$ &        $  \mathbb{Z}$ &          $ \mathbb{R}$ &        $  \mathbb{Q}$ &        $  \mathbb{Z} $&       $    \mathbb{R} $&       $   \mathbb{Q} $ &         $ \mathbb{Z}$\\

\toprule

%
%
\parbox[t]{2mm}{\multirow{3}{*}{\rotatebox[origin=c]{90}{$N=8$}}}  
           &       Time (s) &      0.062 &      0.056 &      0.028 &          0 &          0 &        100 &      0.275 &      0.268 &      0.257 &         14 &         32 &         54 \\

           &   \#     Iters &    1611.12 &     1588.20 &    1166.71 &          0 &          0 &        100 &    6139.78 &     5899.80 &    5556.73 &          9 &         30 &         61 \\
           
&    \#    Nodes &          - &          - &          - &          - &          - &          - &      69.45 &      68.43 &      68.58 &         23 &         22 &         35 \\

\midrule
\parbox[t]{2mm}{\multirow{3}{*}{\rotatebox[origin=c]{90}{$N=16$}}}  

           &       Time (s) &      0.826 &      0.622 &      0.305 &          0 &          0 &        100 &      4.329 &      4.188 &      3.532 &         10 &         15 &         75 \\

           &    \#    Iters &    6273.46 &    5697.03 &    4935.45 &          0 &          0 &        100 &   48061.45 &   44991.74 &   33029.27 &          2 &          4 &         94 \\

&   \#     Nodes &          - &          - &          - &          - &          - &          - &     273.27 &     273.61 &     278.36 &         24 &         32 &         34 \\
\midrule
\parbox[t]{2mm}{\multirow{3}{*}{\rotatebox[origin=c]{90}{$N=32$}}} 

           &       Time (s) &     13.196 &     10.476 &       5.120 &          0 &          0 &        100 &    166.168 &    169.341 &     93.525 &          4 &          4 &         92 \\

           &       \#    Iters &   24580.03 &   22588.88 &   20029.49 &          0 &          0 &        100 &   883694.80 &   848579.10 &   220192.30 &          2 &          4 &         94 \\
&       \#     Nodes &             - &          - &          - &          - &          - &          - &    1549.25 &     1629.70 &     1588.10 &         30 &         27 &         39 \\

\bottomrule
\end{tabular}  
}
\end{table}
We will now discuss our observations from the computational experiments. 
When we consider the LP version, we clearly see that the $\mathbb{Z}$-Formulation is better than both $\mathbb{R}$-Formulation and $\mathbb{Q}$-Formulation as it  requires less number of simplex iterations and computational time in all instances.

For the IP version, the conclusions are not as clear-cut although there is a pattern  that the relative success of  the $\mathbb{Z}$-Formulation improves with the instance size.  The $\mathbb{Z}$-Formulation is the best  in terms of the computational time and number of iterations if we consider the averages and the number of wins. However, we note that there are some random directions in which the other formulations occasionally outperform the $\mathbb{Z}$-Formulation. Although the number of branch-and-bound nodes is quite similar for all the formulations, the $\mathbb{Z}$-Formulation has significantly smaller number of iterations per node. Finally, we remark that the performance of $\mathbb{R}$-Formulation and $\mathbb{Q}$-Formulation are quite similar in our experiments.

Overall, our computational results suggest that the $\mathbb{Z}$-Formulation proposed in this paper might have advantages compared the  $\mathbb{R}$-Formulation, at least in our stylized experiments. We would like to note that we have repeated the experiments with some modifications (choosing the same $R_i$ value for each ball, choosing the objective directions from a box, and choosing both the centers and radii of the balls randomly) and observed a similar behavior. 

\subsection{Discussion and Future Work}
We conclude our paper with a brief discussion on the applications discussed in this section. We  presented the first two applications from a theoretical perspective and did not provided any computational results due to the following reasons: For the integrality gap computation, it is necessary to compute (or accurately estimate)  the largest  subdeterminant of the integral matrix used in the outer-approximation. Unfortunately, this task is NP-Hard in general \cite{summa2014largest}. For the counting of integral vectors, the difficulty is the implementation of the integer projection algorithm \cite{barvinok2003proj}, which is quite tedious. To the best of our knowledge, the only implementation is carried out in \cite{koppe2008implementation} and tested on a set of toy examples. We believe that any computational attempt in these applications should exploit the structure of the outer-approximation, which we  leave  as future work.

For the computational application,  our aim was to provide some evidence about the usefulness of the rational outer-approximations against the irrational outer-approximations, which we were able to achieve. Nevertheless, we  also tried to directly solve  the optimization problem over the intersection of balls as an SOCP or MISOCP. We observed that such an approach is, in general, more efficient than solving via outer-approximation. We  leave the exploration of the rational outer-approximation as a competitive tool against directly solving the SOCP or MISOCP as a future research direction. 
}

\section*{Acknowledgments}
The author would like to thank Dr. Diego Moran for his comments on an earlier version of this paper,  {and two referees for their suggestions regarding Sections~\ref{sec:deltaGivenM} and \ref{sec:applications}.}

\bibliographystyle{plain}
\bibliography{references}

\begin{thebibliography}{10}

\bibitem{Atamturk12}
A.~Atamt\"{u}rk, G.~Berenguer, and Z.~Shen.
\newblock A conic integer programming approach to stochastic joint
  location-inventory problems.
\newblock {\em Operations Research}, 60(2):366--381, March 2012.

\bibitem{barmann2016polyhedral}
A.~B{\"a}rmann, A.~Heidt, A.~Martin, S.~Pokutta, and C.~Thurner.
\newblock Polyhedral approximation of ellipsoidal uncertainty sets via extended
  formulations: a computational case study.
\newblock {\em Computational Management Science}, 13(2):151--193, 2016.

\bibitem{barvinok1994}
A.~I. Barvinok.
\newblock A polynomial time algorithm for counting integral points in polyhedra
  when the dimension is fixed.
\newblock {\em Mathematics of Operations Research}, 19(4):769--779, 1994.

\bibitem{barvinok2007lecture}
A.~I. Barvinok.
\newblock Lattice points, polyhedra, and complexity.
\newblock {\em Geometric Combinatorics, IAS/Park City Mathematics Series},
  13:19--62, 2007.

\bibitem{barvinok2003proj}
A.~I. Barvinok and K.~Woods.
\newblock Short rational generating functions for lattice point problems.
\newblock {\em Journal of the American Mathematical Society}, 16(4):957--979,
  2003.

\bibitem{ben2001lectures}
A.~Ben-Tal and A.~Nemirovski.
\newblock {\em Lectures on modern convex optimization: analysis, algorithms,
  and engineering applications}, volume~2.
\newblock SIAM, 2001.

\bibitem{ben2001polyhedral}
A.~Ben-Tal and A.~Nemirovski.
\newblock On polyhedral approximations of the second-order cone.
\newblock {\em Mathematics of Operations Research}, 26(2):193--205, 2001.

\bibitem{Brandenberg}
R.~Brandenberg and L.~Roth.
\newblock New algorithms for k-center and extensions.
\newblock In B.~Yang, D.~Du, and C.~Wang, editors, {\em Combinatorial
  Optimization and Applications}, volume 5165 of {\em Lecture Notes in Computer
  Science}, pages 64--78. Springer Berlin Heidelberg, 2008.

\bibitem{cook1986sensitivity}
W.~Cook, A.~M.~H. Gerards, A.~Schrijver, and {\'E}.~Tardos.
\newblock Sensitivity theorems in integer linear programming.
\newblock {\em Mathematical Programming}, 34(3):251--264, 1986.

\bibitem{latte2004}
J.~A. De~Loera, R.~Hemmecke, J.~Tauzer, and R.~Yoshida.
\newblock Effective lattice point counting in rational convex polytopes.
\newblock {\em Journal of Symbolic Computation}, 38(4):1273--1302, 2004.

\bibitem{dyer1991}
M.~Dyer.
\newblock On counting lattice points in polyhedra.
\newblock {\em {SIAM} Journal on Computing}, 20(4):695--707, 1991.

\bibitem{glineur2000computational}
F.~Glineur.
\newblock Computational experiments with a linear approximation of second-order
  cone optimization.
\newblock Technical report, 2000.

\bibitem{Kocuk2015}
B.~Kocuk, S.~S. Dey, and X.~A. Sun.
\newblock New formulation and strong {MISOCP} relaxations for {AC} optimal
  transmission switching problem.
\newblock {\em IEEE Transactions on Power Systems}, 32(6):4161--4170, 2017.

\bibitem{koppe2008implementation}
M.~K{\"o}ppe, S.~Verdoolaege, and K.~M. Woods.
\newblock An implementation of the {B}arvinok--{W}oods integer projection
  algorithm.
\newblock In {\em The 2008 International Conference on Information Theory and
  Statistical Learning}, pages 53--59, 2008.

\bibitem{lubin2018polyhedral}
M.~Lubin, E.~Yamangil, R.~Bent, and J.~P. Vielma.
\newblock Polyhedral approximation in mixed-integer convex optimization.
\newblock {\em Mathematical Programming}, 172(1-2):139--168, 2018.

\bibitem{paat2020distances}
J.~Paat, R.~Weismantel, and S.~Weltge.
\newblock Distances between optimal solutions of mixed-integer programs.
\newblock {\em Mathematical Programming}, 179(1):455--468, 2020.

\bibitem{Pinar13}
M.~\c{C}. P{\i}nar.
\newblock Mixed-integer second-order cone programming for lower hedging of
  {A}merican contingent claims in incomplete markets.
\newblock {\em Optimization Letters}, 7(1):63--78, 2013.

\bibitem{summa2014largest}
M.~D. Summa, F.~Eisenbrand, Y.~Faenza, and C.~Moldenhauer.
\newblock On largest volume simplices and sub-determinants.
\newblock In {\em Proceedings of the twenty-sixth annual ACM-SIAM symposium on
  Discrete algorithms}, pages 315--323. SIAM, 2014.

\bibitem{verdoolaegebarvinok}
S.~Verdoolaege.
\newblock Barvinok: a library for counting the number of integer points in
  parametric and non-parametric polytopes, version 0.26, 2008.

\bibitem{vielma2008lifted}
J.~P. Vielma, S.~Ahmed, and G.~L. Nemhauser.
\newblock A lifted linear programming branch-and-bound algorithm for
  mixed-integer conic quadratic programs.
\newblock {\em INFORMS Journal on Computing}, 20(3):438--450, 2008.

\bibitem{vielma2017extended}
J.~P. Vielma, I.~Dunning, J.~Huchette, and M.~Lubin.
\newblock Extended formulations in mixed integer conic quadratic programming.
\newblock {\em Mathematical Programming Computation}, 9(3):369--418, 2017.

\bibitem{VINEL201766}
A.~Vinel and P.~A. Krokhmal.
\newblock Mixed integer programming with a class of nonlinear convex
  constraints.
\newblock {\em Discrete Optimization}, 24:66 -- 86, 2017.

\end{thebibliography}

\end{document}